\newtheorem{remark}[theorem]{Remark}
\newcommand{\N}{\mathbb{N}}
\newcommand{\R}{\mathbb{R}}
\newcommand{\Ld}{L_\diamond}
\newcommand{\Hd}{H_\diamond}
\newcommand{\LL}{\mathcal{L}}
\newcommand{\sS}{\mathbb{S}}
\newcommand{\1}{\ensuremath{\mathbbm{1}}}
\newcommand{\dx}[1][x]{\ensuremath{\,{\rm{d}} #1}}
\def\norm#1{\hspace{0.2ex} \|#1\| \hspace{0.2ex}} 
\newcommand{\kommentar}[1]{}
\begin{document}

\title{The Calder\'on problem with finitely many unknowns is equivalent to convex semidefinite optimization}

\author{Bastian Harrach\footnotemark[2]}
\renewcommand{\thefootnote}{\fnsymbol{footnote}}


\footnotetext[2]{Institute for Mathematics, Goethe-University Frankfurt, Frankfurt am Main, 
Germany (harrach@math.uni-frankfurt.de)}

\maketitle
\begin{abstract}
We consider the inverse boundary value problem of determining a coefficient function in an elliptic partial differential equation from knowledge of the associated Neumann-Dirichlet-operator. The unknown coefficient function is assumed to be piecewise constant with respect to a given pixel partition, and upper and lower bounds are assumed to be known a-priori.

We will show that this Calder\'on problem with finitely many unknowns can be equivalently formulated as a minimization problem for a linear cost functional with a convex non-linear semidefinite constraint. We also prove error estimates for noisy data, and extend the result to the practically relevant case of finitely many measurements, where the coefficient is to be reconstructed from a finite-dimensional Galerkin projection of the Neumann-Dirichlet-operator.

Our result is based on previous works on Loewner monotonicity and convexity of the Neumann-Dirichlet-operator, and the technique of localized potentials. It connects the emerging fields of inverse coefficient problems and semidefinite optimization.
\end{abstract}

\begin{keywords}
Inverse coefficient problem, Calder\'on problem, finite resolution, semidefinite optimization, Loewner monotonicity and convexity
\end{keywords}

\begin{AMS}
35R30, 
90C22 
\end{AMS}

\section{Introduction}

We consider the Calder\'on problem of determining the spatially dependent coefficient function $\sigma$ in the elliptic partial differential equation
\begin{equation*}
\nabla\cdot (\sigma \nabla u)=0
\end{equation*}
from knowledge of the associated (partial data) Neumann-Dirichlet-operator $\Lambda(\sigma)$, cf.\ section \ref{subsect:setting_Calderon} for the precise mathematical setting.
The coefficient function $\sigma$ is assumed to be piecewise constant with respect to a given pixel partition of the underlying imaging domain, so that only finitely many unknowns have to be reconstructed. We also assume that upper and lower bounds $b>a>0$ are known a-priori, so that the pixel-wise values of the unknown coefficient function can be identified with a vector in $[a,b]^n\subset \mathbb{R}^n$, where $n\in \N$ is the number of pixels. 

In this paper, we prove that the problem can be equivalently reformulated as a convex optimization problem where a linear cost function is to be minimized under a non-linear convex semi-definiteness constraint. Given $\hat Y=\Lambda(\hat \sigma)$, the
vector of pixel-wise values of $\hat\sigma$ is shown to be the unique minimizer of 
\[
c^T \sigma\to \text{min!}\quad \text{s.t.} \quad \sigma\in [a,b]^n,\ \Lambda(\sigma)\preceq\hat Y,
\]
where $c\in \mathbb{R}^n$ only depends on the pixel partition, and on the upper and lower bounds $a,b>0$. The symbol ``$\preceq$'' denotes the semidefinite (or Loewner) order, and $\Lambda$ is shown to be convex with respect to this order, so that the admissable set of this optimization problem is convex.

We also prove an error estimate for the case of noisy measurements $Y^\delta\approx \Lambda(\hat \sigma)$, and show that our results still hold for the case of finitely many (but sufficiently many) measurements. 

Let us give some more remarks on the origins and relevance of this result.
The Calder\'on problem \cite{calderon1980inverse,calderon2006inverse} has received immense attention in the last decades due to its relevance for non-destructive testing and medical imaging applications, and its theoretical importance in studying inverse coeffient problems. We refer to \cite{krupchyk2016calderon,caro2016global} for recent theoretical breakthroughs, and the books \cite{mueller2012linear,seo2012nonlinear,adler2021electrical} for the 
prominent application of electrical impedance tomography.

In practical applications, only a finite number of measurements can be taken and the unknown coefficient function can only be reconstructed up to a certain resolution. For the resulting finite-dimensional non-linear inverse problems, uniqueness results have been obtained only recently in \cite{alberti2019calderon,harrach2019uniqueness}.

Numerical reconstruction algorithms for the Calder\'on problem and related inverse coefficient problems are typically based on Newton-type iterations or on minimizing a non-convex regularized data fitting functional. Both approaches highly suffer from the problem of local convergence, resp., local minima, and therefore require a good initial guess close to the unknown solution which is usually not available in practice. We refer to the above mentioned books \cite{mueller2012linear,seo2012nonlinear,adler2021electrical} for an overview on this topic, and point out the result in \cite{Lec08} that shows a local convergence result for the Newton method for EIT with finitely many measurements and unknowns. For a specific infinite-dimensional setting, a convexification idea was developed in \cite{klibanov2019convexification}.
Moreover, knowing $\hat Y=\Lambda(\hat\sigma)$ (i.e., infinitely many measurements) and the pixel partition, the values in $\hat\sigma$ can also be recovered one by one with globally convergent one-dimensional monotonicity tests,
and these tests can be implemented as in \cite{garde2020reconstruction,garde2022simplified} without knowing the upper and lower conductivity bounds. But, to the knowledge of the author, these ideas do not carry over to the case of finitely many measurements, and, for this practically important case, the problem of local convergence, resp., local minima, remained unsolved. 

The new equivalent convex reformulation of the Calder\'on problem with finitely many unknowns presented in this work 
connects the emerging fields of inverse problems in PDEs and semidefinite optimization. It is based on previous works on Loewner monotonicity and convexity, and the technique of localized potentials \cite{gebauer2008localized,harrach2013monotonicity}, and extends the recent work \cite{harrach2021solving} to the Calder\'on problem. The origins of these ideas go back to inclusion detection algorithms
such as the factorization and monotonicity method \cite{kirsch1998characterization,bruhl2000numerical,tamburrino2002new},
and the idea of overcoming non-linearity in such problems \cite{harrach2010exact}.

The structure of this article is as follows. In section 2, we demonstrate on a simple, yet illustrative example how non-linear inverse coefficient problems such as the Calder\'on problem suffer from local minima. In section 3 we then formulate our main results on reformulating the Calder\'on problem as a 
convex minimization problem. In section 4 the results are proven, and section 5 contains some conclusions and an outlook.

\section{Motivation: The problem of local minima}\label{section:Motivation}

To motivate the importance of finding convex reformulations, we first give a simple example on how drastically inverse coefficent problems such as the Calder\'on problem can suffer from local minima. We consider
\begin{equation}\label{eq:Motivation_EIT}
\nabla\cdot (\sigma \nabla u)=0
\end{equation}
in the two-dimensional unit ball $\Omega=B_1(0)\subset \mathbb{R}^2$. The coefficient $\sigma$ is assumed to be the radially-symmetric piecewise-constant function
\[
\sigma(x)=\left\{ \begin{array}{l c r@{\,} l} 
1 & \text{ for } &1&>\norm{x}\geq r_1,\\
\sigma_1 &  \text{ for } &r_1&> \norm{x}\geq r_2,\\
\sigma_2 &  \text{ for } &r_2&> \norm{x},
\end{array}\right.
\]
with known radii $1>r_1>r_2>0$, but unknown values $\sigma_1,\sigma_2>0$, cf.\ figure~\ref{fig:Setting}
for a sketch of the setting with $r_1:=0.5$ and $r_2:=0.25$.

\begin{figure}
\begin{center}
\mbox{\includegraphics[height=0.25\textheight]{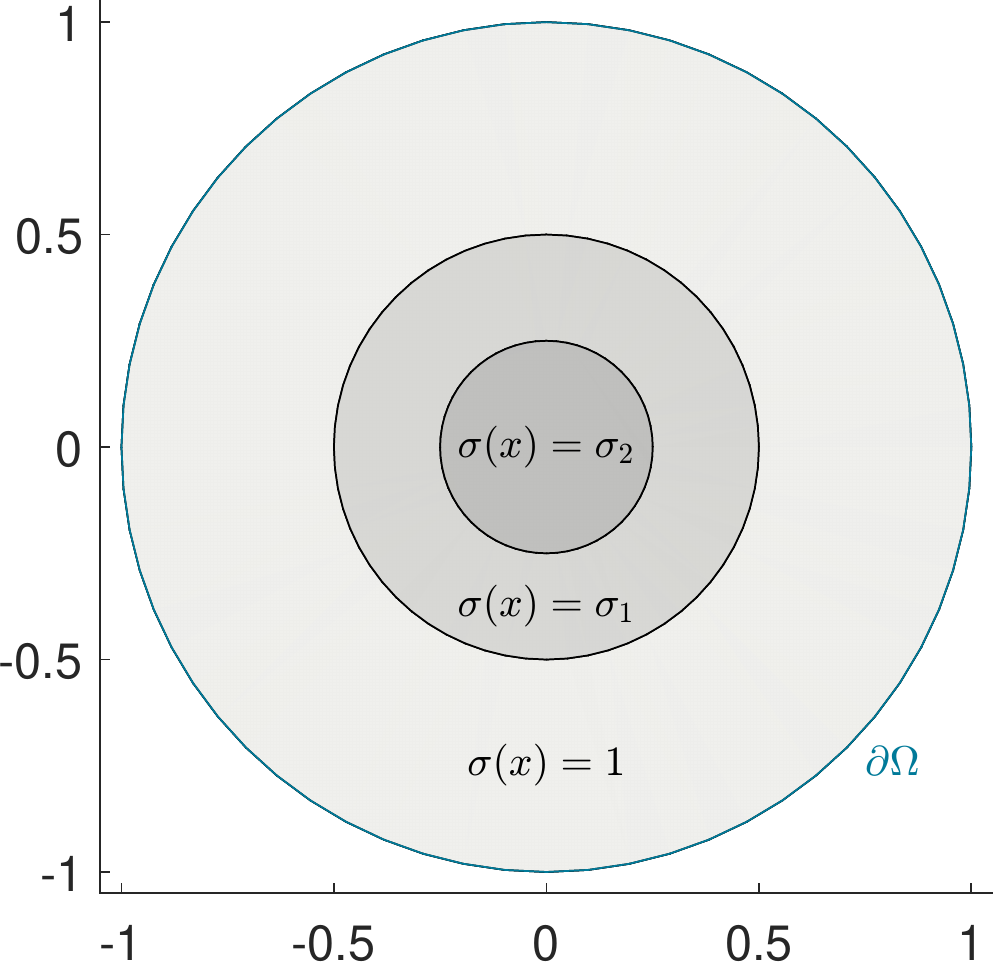}} 
\end{center}
\caption{A simple example with two unknown values.}
\label{fig:Setting}
\end{figure}

We aim to reconstruct the two unknown values $\sigma=(\sigma_1,\sigma_2)$ from the Neumann-Dirichlet-operator (NtD)
\[
\Lambda(\sigma):\ g\mapsto u\vert_{\partial \Omega}, \quad
\text{where $u$ solves \eqref{eq:Motivation_EIT} with $\sigma\partial_\nu u\vert_{\partial \Omega}=g$.}
\]

For this simple geometry, the NtD can be calculated analytically. 
Using polar coordinates $(r,\varphi)$, it is easily checked that, for each $j\in \N$, the function
\[
u(r,\varphi):=\left\{ \begin{array}{r c r@{\,} l} 
r^j \sin(j\varphi) & \text{ for } & r_2&>r,\\[+.25ex]
\frac{1}{2}(a_j r^j + b_j r^{-j})\sin(j\varphi) & \text{ for } & r_1& >r\geq r_2,\\[+.5ex]
\frac{1}{4}(c_j r^j + d_j r^{-j})\sin(j\varphi) & \text{ for } & 1&>r\geq r_1.
\end{array}\right.
\]
solves \eqref{eq:Motivation_EIT} (in the weak sense) if $a_j,b_j,c_j,d_j\in \mathbb{R}$ fulfill the following four interface conditions 
\begin{alignat*}{2}
u\vert_{\partial B_{r_2}^-}&=u\vert_{\partial B_{r_2}^+}, & \qquad 
u\vert_{\partial B_{r_1}^-}&=u\vert_{\partial B_{r_1}^+},\\
\sigma\partial_\nu u\vert_{\partial B_{r_2}^-}&=\sigma\partial_\nu u\vert_{\partial B_{r_2}^+}, & \qquad 
\sigma\partial_\nu u\vert_{\partial B_{r_1}^-}&=\sigma\partial_\nu u\vert_{\partial B_{r_1}^+}.
\end{alignat*}
%
This is equivalent to $a_j,b_j,c_j,d_j\in \mathbb{R}$ solving the linear system
\begin{alignat*}{2}
1&=\frac{1}{2}(a_j + b_j r_2^{-2j}), & \qquad 
a_j  + b_j r_1^{-2j}&=\frac{1}{2}(c_j  + d_j r_1^{-2j}),\\
\frac{\sigma_2}{\sigma_1} &= \frac{1}{2}(a_j - b_j r_2^{-2j}), & \qquad
\sigma_1(a_j   - b_j  r_1^{-2j})&=\frac{1}{2}(c_j  - d_j  r_1^{-2j}),
\end{alignat*}
and from this we easily obtain that
\begin{align*}
a_j&=1+\frac{\sigma_2}{\sigma_1},\\
b_j&=\left( 1-\frac{\sigma_2}{\sigma_1} \right) r_2^{2j},\\
c_j&=a_j  + b_j r_1^{-2j} + \sigma_1(a_j   - b_j  r_1^{-2j})\\
&=\left(\frac{1}{\sigma_1}+1\right)(\sigma_1+\sigma_2)+\left(\frac{1}{\sigma_1}-1\right)(\sigma_1-\sigma_2)\frac{r_2^{2j}}{r_1^{2j}},\\
d_j   &=  a_j r_1^{2j} + b_j  - \sigma_1(a_j r_1^{2j}  - b_j )\\
&=  \left(\frac{1}{\sigma_1}-1\right)\left(\sigma_1+\sigma_2\right)r_1^{2j}  + \left(\frac{1}{\sigma_1}+1\right)\left( \sigma_1-\sigma_2 \right) r_2^{2j}.
\end{align*}

The Dirichlet- and Neumann boundary values of $u$ are 
\begin{align*}
u\vert_{\partial B_1(0)}&=\frac{1}{4}(c_j+d_j)\sin(j\varphi),\quad \text{ and } \quad
\sigma\partial_\nu u\vert_{\partial B_1(0)}=\frac{j}{4}(c_j-d_j)\sin(j\varphi),
\end{align*}
so that 
\[
\Lambda(\sigma)\sin(j\varphi)=\lambda_j \sin(j\varphi) \quad \text{ with } \quad
\lambda_j:=\frac{c_j+d_j}{j(c_j-d_j)}.
\]
By rotational symmetry, the same holds with $\sin(\cdot)$ replaced by $\cos(\cdot)$.
Hence, with respect to the standard $L^2$-orthonormal basis of trigonometric functions
\[
\left\{g_1,g_2,g_3,\ldots\right\}=
\left\{ \frac{1}{\pi}\sin(\varphi),\frac{1}{\pi}\cos(\varphi),\frac{1}{\pi}\sin(2\varphi),\ \ldots\right\}\subseteq \Ld^2(\partial \Omega),
\]
the Neumann-Dirichlet-operator $\Lambda(\sigma)\in \mathcal{L}(\Ld^2(\partial \Omega))$ can be written as the infinite-dimensional diagonal matrix
\[
\Lambda(\sigma)=\begin{pmatrix}\lambda_1\\ & \lambda_1\\ & & \lambda_2\\ & & & \ddots \end{pmatrix},
\]
with $\lambda_j$ depending on $\sigma=(\sigma_1,\sigma_2)\in \mathbb{R}^2$ as given above,
and $\Ld^2(\partial \Omega)$ denoting the space of $L^2$-functions with vanishing integral mean on $\partial \Omega$.

We assume that we can only take finitely many measurements of $\Lambda(\sigma)$. For this example, we choose the measurements to be the upper left $6\times 6$-part of this matrix, i.e., the Galerkin projektion of $\Lambda(\sigma)$ to $\operatorname{span}\{g_1,\ldots,g_6\}$,
\[
F(\sigma):=\left( \int_{\partial\Omega} g_j \Lambda(\sigma)g_k \dx[s] \right)_{j,k=1,\ldots,6}\in \mathbb{R}^{6\times 6},
\]
and try to reconstruct $\sigma\in \R^2$ from $F(\sigma)\in \R^{6\times 6}$. Note that, effectively, we thus aim to reconstruct two unknowns $\sigma=(\sigma_1,\sigma_2)$ from three independent measurements $\lambda_1$, $\lambda_2$, and $\lambda_3$.

\begin{figure}
\begin{center}
\begin{tabular}{c c}
\mbox{\includegraphics[height=0.25\textheight]{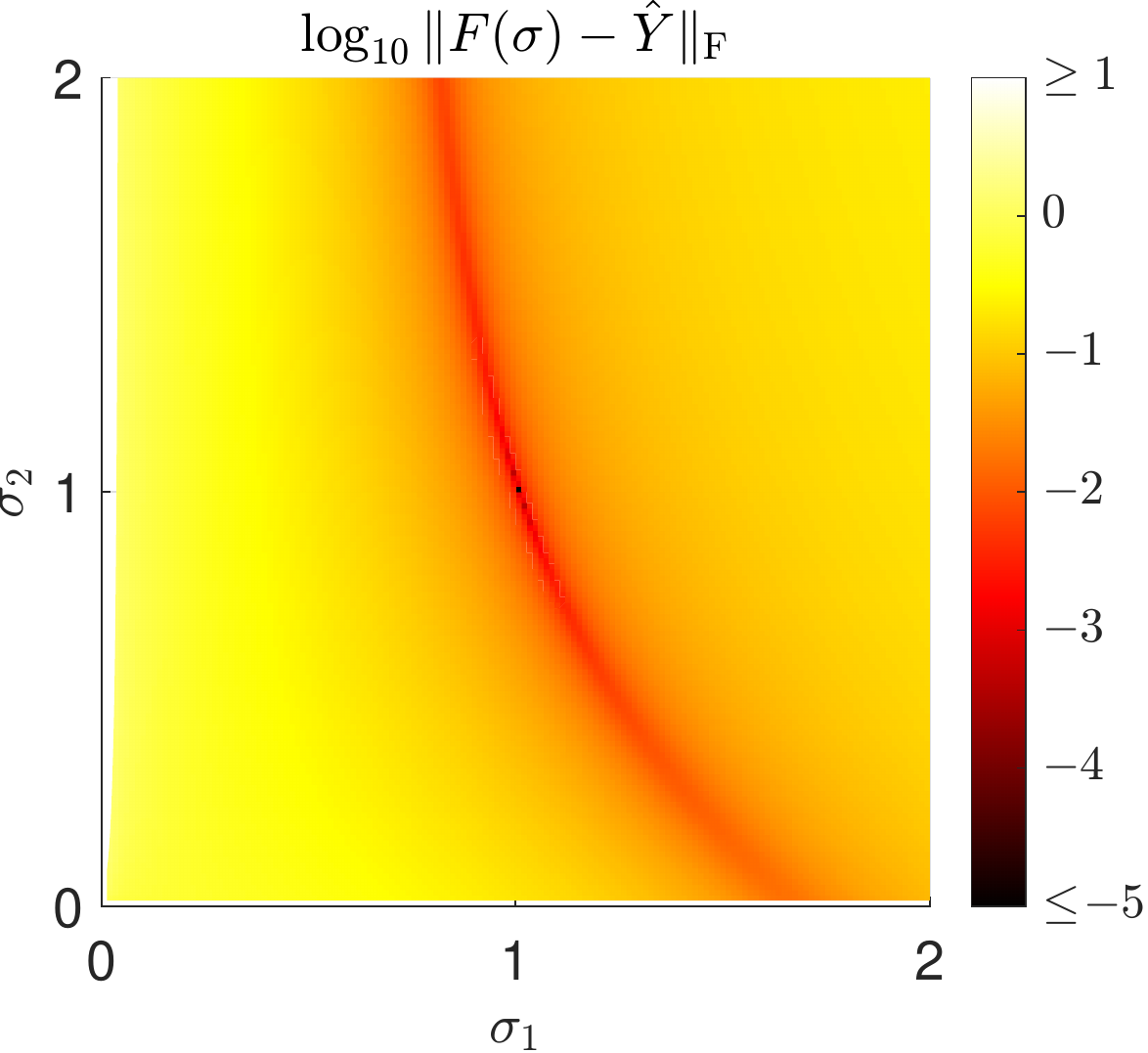}}  &
\mbox{\includegraphics[height=0.25\textheight]{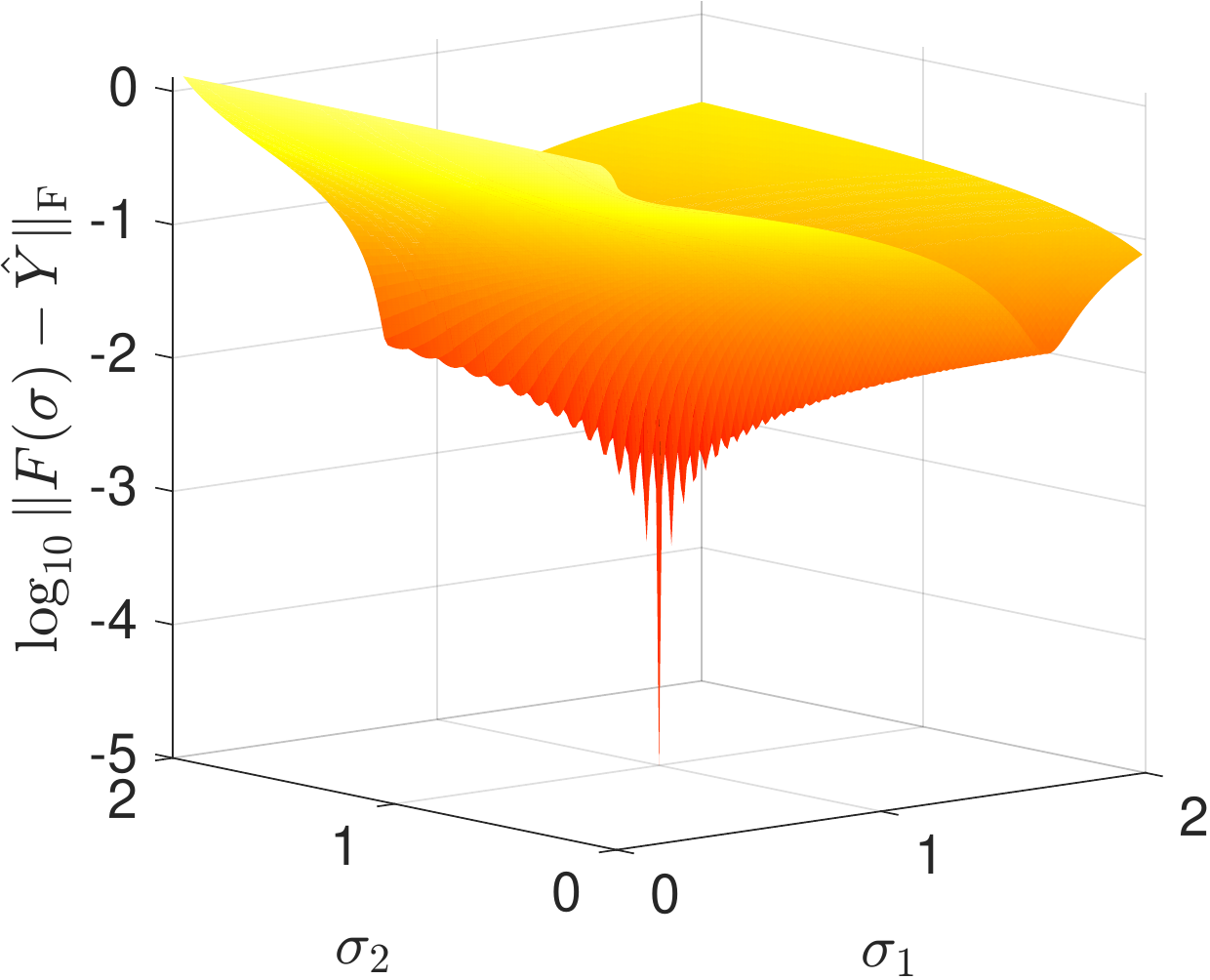}}
\end{tabular}
\end{center}
\caption{Residuum functional for a least-squares data-fitting approach.}
\label{fig:Residual}
\end{figure}

We now demonstrate how standard least-squares data-fitting approaches for this inverse problem may suffer from local minima. We set $\hat\sigma:=(1,1)$ and $\hat Y:=F(\hat \sigma)$ to be the true noiseless measurements of the associated NtD. The natural approach is to find an approximation $\sigma\approx \hat\sigma$ by minimizing the least-squares residuum functional
\begin{equation}\label{eq:LSq_residual}
\norm{F(\sigma)-\hat Y}_F^2\to \text{min!}
\end{equation}
where $\norm{\cdot}_F$ denotes the Frobenius norm. Figure~\ref{fig:Residual} shows the values of this residuum functional as a function of $\sigma=(\sigma_1,\sigma_2)$. It clearly shows a global minimum at the correct value $\sigma=(1,1)$ but also a drastic amount of local minimizers. 

Hence, without a good initial guess, local optimization approaches are prone to end up in local minimizers that may lie far away from the correct coefficient values, and global optimization approaches for such highly non-convex residuum functionals quickly become computationally infeasible for raising numbers of unknowns. 

We demonstrate this problem of local convergence by applying the generic MATLAB solver \verb.lsqnonlin. to the 
least-squares minimization problem \eqref{eq:LSq_residual} (with all options of \verb.lsqnonlin. left on their default values). Figure \ref{fig:Convergence} shows the Euklidian norm of the error  
of the final iterate $\sigma^{(N)}$ returned by \verb.lsqnonlin. as a function of the initial value $\sigma^{(0)}=(\sigma^{(0)}_1,\sigma^{(0)}_2)$. It shows how some regions of starting values lead to inaccurate or even completely wrong results. Note that the values are plotted in logarithmic scale and cropped above and below certain thresholds to improve presentation. Also, in our example, increasing the number of measurements $m$ did not alleviate these problems,
and the performance of \verb.lsqnonlin. did not change when providing the symbolically calculated Jacobian of $F$.

\begin{figure}
\begin{center}
\begin{tabular}{c}
\mbox{\includegraphics[height=0.25\textheight]{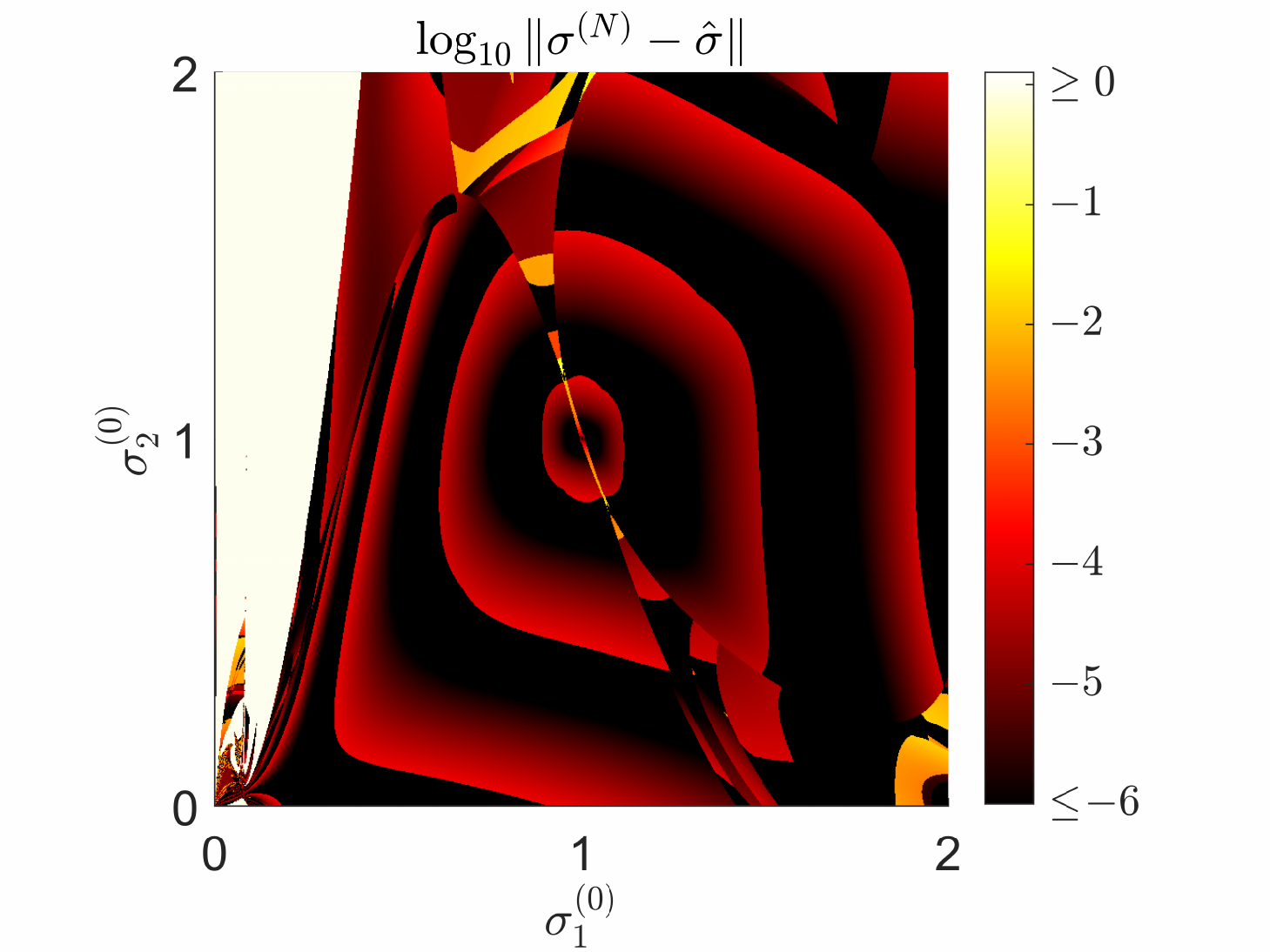}}
\end{tabular}
\end{center}
\caption{Error of the final iterate of a standard minimization algorithm 
when started with different initial values $\sigma^{(0)}=(\sigma^{(0)}_1,\sigma^{(0)}_2)$.}
\label{fig:Convergence}
\end{figure}

Our example illustrates how standard approaches to non-linear inverse coefficient problems (such as the Calder\'on problem) may lead to non-convex residuum functionals that are highly affected by the problem of local minimizers. Overcoming the problem of non-linearity requires overcoming this problem of non-convexity. In the next sections we will show that this is indeed possible. The Calder\'on problem with finitely many unknowns and measurements can be equivalently reformulated as a convex minimization problem.

\pagebreak

\section{Setting and main results}
\subsection{The partial data Calder\'on problem}\label{subsect:setting_Calderon}
Let $\Omega\subset \mathbb{R}^d$, $d\geq 2$ be a Lipschitz bounded domain, 
let $\nu$ denote the outer normal on $\partial \Omega$, and let $\Sigma\subseteq \partial \Omega$ be a relatively open boundary part. $L^\infty_+(\Omega)$ denotes the subset of $L^\infty$-functions with positive essential infima, $\Hd^1(\Omega)$ and $\Ld^2(\Sigma)$ denote the spaces of $H^1$- and $L^2$-functions with vanishing integral mean on $\Sigma$.

For $\sigma\in L^\infty_+(\Omega)$, the partial data (or local) Neumann-Dirichlet-operator
\[
\Lambda(\sigma):\ \Ld^2(\Sigma)\to \Ld^2(\Sigma)
\]
is defined by $\Lambda(\sigma)g:=u\vert_\Sigma$ where $u\in \Hd^1(\Omega)$ solves
\begin{equation}
\label{eq:EIT}
\nabla\cdot (\sigma \nabla u)=0 \quad \text{ in $\Omega$,} \quad \text{ and } \quad
\sigma \partial_\nu u\vert_{\partial \Omega}=\left\{ \begin{array}{l l} 
g & \text{ on $\Sigma$,}\\
0 & \text{ else.}
\end{array}\right.
\end{equation}
Occasionally, we will also write $u_\sigma^g$ for the solution of \eqref{eq:EIT} in the following to highlight the dependence on the conductivity coefficient $\sigma$ and the Neumann boundary data $g$.

It is well-known (and easily follows from standard elliptic PDE theory) that $\Lambda(\sigma)$ is a compact self-adjoint operator from $\Ld^2(\Sigma)$ to $\Ld^2(\Sigma)$.
The question, whether $\Lambda(\sigma)$ uniquely determines $\sigma$, is known as the 
(partial data) \emph{Calder\'on problem}.

\subsection{The Calder\'on problem with finitely many unknowns}

We will now introduce the Calder\'on problem with finitely many unknowns, and formulate our first main result on its equivalent convex reformulation. 

\paragraph{The setting with finitely many unknowns}
We assume that $\Omega$ is decomposed into 
$n\in \N$ pixels, i.e., 
\[
\overline \Omega=\bigcup_{j=1}^n \overline{P_j}.
\]
where $P_1,\ldots,P_n\subseteq \Omega$ are non-empty, pairwise disjoint subdomains with Lipschitz boundaries. We furthermore assume that the pixels are numbered according to their distance from the boundary part $\Sigma$, so that the following holds: For any $j\in \{1,\ldots,n\}$ we define
\[
Q_j:=\bigcup_{i>j} \overline{P_i} 
\]
and assume that, for all $j=1,\ldots,n$, the complement of $Q_j$ in $\overline \Omega$ is connected and contains a non-empty relatively open subset of $\Sigma$. 

We will consider conductivity coefficients $\sigma\in L^\infty_+(\Omega)$ that are piecewise constant with respect to this pixel partition, and assume that we know upper and lower bounds, $b>a>0$. Hence,
\[
\sigma(x)=\sum_{j=1}^n \sigma_j \chi_{P_j}(x) \quad \text{ with } \sigma_1,\ldots,\sigma_n\in [a,b]\subset \mathbb{R}_+,
\]
and $\chi_{P_j}:\ \Omega\to \mathbb{R}$ denoting the characteristic functions on $P_j$.
In the following, with a slight abuse of notation, we identify such a piecewise constant function $\sigma:\ \Omega\to \mathbb{R}$
with its coefficient vector $\sigma=(\sigma_1,\ldots,\sigma_n)^T\in \mathbb{R}^n$.
Accordingly, we consider $\Lambda$ as a non-linear operator
\[
\Lambda:\ \mathbb{R}^n_+\to \LL(L^2_\diamond(\Sigma)),
\]
and consider the problem to
\[
\text{reconstruct } \quad \sigma\in [a,b]^n\subset \mathbb{R}^n_+ \quad \text{ from } \quad \Lambda(\sigma)\in  \LL(L^2_\diamond(\Sigma)).
\]
Here, and in the following, $\mathbb{R}^n_+:=(0,\infty)^n$ denotes the space of all vectors in $\mathbb{R}^n$ containing only positive entries. Also, throughout this work, we write ``$\leq$'' for the componentwise order on $\mathbb{R}^n$. 

\paragraph{Convex reformulation of the Calder\'on problem}
Let ``$\preceq$'' denote the semidefinite (or Loewner) order on the space of self-adjoint operators in $\LL(L^2_\diamond(\Sigma))$, i.e., for all $A=A^*\in \LL(L^2_\diamond(\Sigma))$, and 
$B=B^*\in \LL(L^2_\diamond(\Sigma))$,
\[
A\preceq B \quad \text{ denotes that } \quad \int_{\Sigma} g (B-A)g \dx[s] \geq 0 \quad \text{ for all } g\in L^2_\diamond(\Sigma).
\] 

Note that, for a compact selfadjoint operator $A\in \LL(L^2_\diamond(\Sigma))$,
the eigenvalues can only accumulate in zero by the spectral theorem. Hence, $A$ either possesses a maximal eigenvalue $\lambda_{\max}(A)\geq 0$, or zero is the supremum (though not necessarily the maximum) of the eigenvalues. In the latter case we still write $\lambda_{\max}(A)=0$ for the ease of notation. Thus, for compact selfadjoint $A\in \LL(L^2_\diamond(\Sigma))$, 
\begin{equation}\label{eq:lambda_max_equiv}
A\not\preceq 0 \quad \text{ if and only if } \quad \lambda_{\max}(A)>0. 
\end{equation}


Our first main result is that the Calder\'on problem with infinitely many measurements can be equivalently formulated as a uniquely solvable convex non-linear semidefinite program, and to give an error estimate for noisy measurements. Our arguments also yield unique solvability of the Calder\'on problem and its linearized version in our setting with finitely many unknowns. We include this as part of our theorem for the sake of completeness. It should be stressed though that uniqueness is known to hold for general piecewise analytic coefficients, cf.\ \cite{Koh84,Koh85} for the non-linear Calder\'on problem, and \cite{harrach2010exact} for the linearized version. For more general results on the question of uniqueness we refer to the recent uniqueness results cited in the introduction, and the references therein.

\begin{theorem}\label{thm:main_infinite}
There exists $c\in \mathbb{R}^n_+$, and $\lambda>0$, so that for all $\hat\sigma\in [a,b]^n$, and $\hat Y:=\Lambda(\hat\sigma)$, the following holds:
\begin{enumerate}[(a)]
\item The convex semi-definite optimization problem
\begin{equation}\label{eq:sd_program_nonoise_infinite}
\verb|minimize |\ c^T \sigma\ \verb| subject to | \ \sigma\in [a,b]^n,\ \Lambda(\sigma)\preceq \hat Y,
\end{equation}
possesses a unique minimizer and this minimizer is $\hat \sigma$.
%
\item Given $\delta>0$, and a self-adjoint operator 
\[
Y^\delta\in \LL(L^2_\diamond(\Sigma)), \quad \text{ with } \quad \norm{Y^\delta-\hat Y}_{\LL(L^2_\diamond(\Sigma))}\leq \delta,
\]
the convex semi-definite optimization problem
\begin{equation}\label{eq:sd_program_noise_infinite}
\verb|minimize |\ c^T \sigma\ \verb| subject to | \ \sigma\in [a,b]^n,\ \Lambda(\sigma)\preceq Y^\delta+\delta I,
\end{equation}
possesses a minimizer $\sigma^\delta$. Every such minimizer $\sigma^\delta$ fulfills
\[
 \norm{\sigma^\delta-\hat \sigma}_{c,\infty} \leq \frac{2(n-1)}{\lambda}\delta,
\]
where $\norm{\cdot}_{c,\infty}$ denotes the weighted maximum norm
\[
\norm{\sigma}_{c,\infty}:=\max_{j=1,\ldots,n} c_j \vert \sigma_j \vert \quad \text{ for all } \sigma\in \mathbb{R}^n.
\]
\end{enumerate}
Moreover, the non-linear mapping 
\[
\Lambda:\ \mathbb{R}^n_+\to \LL(L^2_\diamond(\Sigma))
\]
is injective, and its Fr\'echet derivative $\Lambda'(\sigma)\in \LL(\mathbb{R}^n,\LL(L^2_\diamond(\Sigma)))$ is injective for all $\sigma\in \mathbb{R}^n_+$.
\end{theorem}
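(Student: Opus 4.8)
\emph{Proof plan.} The whole argument rests on the variational (complementary-energy) characterization
\[
\int_\Sigma g\,\Lambda(\sigma)\,g\dx[s]
=\max_{v\in \Hd^1(\Omega)}\Bigl(2\int_\Sigma g\,v\dx[s]-\int_\Omega \sigma\,|\nabla v|^2\dx\Bigr),
\]
which presents $\sigma\mapsto\int_\Sigma g\,\Lambda(\sigma)\,g\dx[s]$ as a pointwise supremum of functions that are affine and monotonically decreasing in $\sigma$. First I would use this to record, following the cited works on Loewner monotonicity and convexity, that $\Lambda$ is decreasing and convex for the Loewner order — so that the admissible set in \eqref{eq:sd_program_nonoise_infinite} is convex — and, by inserting the competitor $v=u^g_\tau$, the monotonicity estimate
\[
\int_\Sigma g\,(\Lambda(\sigma)-\Lambda(\tau))\,g\dx[s]\ \geq\ \sum_{i=1}^n(\tau_i-\sigma_i)\int_{P_i}|\nabla u^g_\tau|^2\dx .
\]
Writing $M_i\colon g\mapsto \nabla u^g_\tau|_{P_i}$, the constraint $\Lambda(\sigma)\preceq\Lambda(\tau)$ therefore forces the linear matrix inequality $\sum_i(\tau_i-\sigma_i)\,M_i^*M_i\preceq 0$ on $\Ld^2(\Sigma)$.

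The heart of the proof is the converse to monotonicity: $\Lambda(\sigma)\preceq\Lambda(\tau)$ implies the pointwise bound $\sigma\geq\tau$. I would establish this from the matrix inequality above by means of localized potentials, processing the pixels from the boundary inwards. The numbering hypothesis is exactly what guarantees, for every $j$, a sequence of Neumann data whose solutions $u^g_\tau$ carry arbitrarily large gradient energy on $P_j$ while their energy on every deeper pixel in $Q_j=\bigcup_{i>j}\overline{P_i}$ becomes negligible; this is where connectedness of the complement of $Q_j$ and its contact with $\Sigma$ enter. Testing $\sum_i(\tau_i-\sigma_i)\int_{P_i}|\nabla u^g_\tau|^2\leq 0$ against such concentrating sequences and using the inequalities already obtained for the shallower pixels yields $\tau_j\leq\sigma_j$, and the induction closes after $n$ steps. \emph{This localized-potentials induction is the main obstacle}: one must arrange the concentration so that the contributions of the not-yet-resolved deeper pixels are genuinely swallowed in the limit, and one must extract a single concentration constant $\lambda>0$ that is valid \emph{uniformly} over the compact parameter range $[a,b]^n$ (a compactness-plus-continuity argument for $\Lambda$). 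Everything that follows is soft by comparison.

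Granting the converse ordering, part (a) is short: $\hat\sigma$ is admissible because $\Lambda(\hat\sigma)=\hat Y$, every admissible $\sigma$ obeys $\sigma\geq\hat\sigma$, and hence $c^T\sigma\geq c^T\hat\sigma$ for any fixed $c\in\R^n_+$, with equality forcing $\sigma=\hat\sigma$ since all entries of $c$ are strictly positive; thus $\hat\sigma$ is the unique minimizer. For part (b) I would first note that $\norm{Y^\delta-\hat Y}\leq\delta$ gives $\hat Y\preceq Y^\delta+\delta I$, so $\hat\sigma$ is admissible for \eqref{eq:sd_program_noise_infinite} and, the objective being linear and $[a,b]^n$ compact with $\Lambda$ continuous, a minimizer $\sigma^\delta$ exists. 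From $\Lambda(\sigma^\delta)\preceq Y^\delta+\delta I\preceq\hat Y+2\delta I$, i.e.\ $\Lambda(\sigma^\delta)-\Lambda(\hat\sigma)\preceq 2\delta I$, the \emph{quantitative} form of the above induction produces a one-sided pixelwise bound controlled by $\delta/\lambda$; accumulating the corrections along the at most $n-1$ deeper layers is what yields the factor $n-1$, and the opposite sign is pinned down by optimality of $\sigma^\delta$ tested against $\hat\sigma$ together with $c>0$. Choosing $c$ according to the weights appearing in this induction rewrites the two one-sided bounds as the stated estimate in $\norm{\cdot}_{c,\infty}$.

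Finally, injectivity of $\Lambda$ follows by applying the converse ordering twice: $\Lambda(\sigma)=\Lambda(\tau)$ gives both $\sigma\geq\tau$ and $\tau\geq\sigma$. For the Fr\'echet derivative, differentiating the energy identity gives $\int_\Sigma g\,\Lambda'(\sigma)h\,g\dx[s]=-\sum_j h_j\int_{P_j}|\nabla u^g_\sigma|^2\dx$, so $\Lambda'(\sigma)h=0$ means $\sum_j h_j\,M_j^*M_j=0$; here the same boundary-inwards localization is genuinely triangular, because once $h_1=\dots=h_{j-1}=0$ is known the shallower terms drop out identically, and concentrating on $P_j$ relative to $Q_j$ then forces $h_j=0$. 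I expect the sole serious difficulty to be the analytic construction and uniform quantification of the localized potentials under the stated geometric hypotheses; the monotonicity bookkeeping and the positivity of $c$ are routine once that is in place.
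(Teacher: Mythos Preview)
Your proposal contains a genuine gap at the central step. You claim that the converse to monotonicity is the \emph{componentwise} ordering, i.e.\ that $\Lambda(\sigma)\preceq\Lambda(\tau)$ forces $\sigma\geq\tau$, and you propose to prove this by an outward-to-inward induction using localized potentials. But the induction fails at the very first inductive step. Suppose you have already shown $\tau_1\leq\sigma_1$ and you concentrate on $P_2$ with energy on $Q_2$ small. The inequality
\[
(\tau_1-\sigma_1)\int_{P_1}|\nabla u^g_\tau|^2
+(\tau_2-\sigma_2)\int_{P_2}|\nabla u^g_\tau|^2
+\sum_{i>2}(\tau_i-\sigma_i)\int_{P_i}|\nabla u^g_\tau|^2\leq 0
\]
has a \emph{nonpositive} first term, which means you cannot drop it: it relaxes rather than tightens the bound on the remaining terms. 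Worse, the localized potentials result gives you no control whatsoever on $\int_{P_1}|\nabla u^g_\tau|^2$; on the contrary, since $P_1$ lies between $\Sigma$ and $P_2$, that integral will typically blow up at least as fast as the one on $P_2$. So nothing prevents $\tau_2>\sigma_2$. You correctly observe that for $\Lambda'(\sigma)$ this issue disappears because the shallower terms vanish identically once $h_1=\dots=h_{j-1}=0$, but in the nonlinear problem a mere sign does not suffice.

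This is precisely the obstacle the paper identifies and works around. The paper does \emph{not} prove $\sigma\geq\tau$; it proves only the weaker conclusion $c^T\tau>c^T\sigma$ for a carefully constructed $c$, and explicitly remarks that the stronger localized-potentials estimates needed for the componentwise result (or even for $c=\1$) are not available for the Calder\'on problem. The key technical device you are missing is a compactness-plus-scaling argument: first one shows that for each $j$ there is a small $\delta>0$ with $-\Lambda'(\sigma)(e_j-\delta e_j^- - (n-1)e_j^+)\not\preceq 0$ uniformly in $\sigma\in[a,b]^n$, i.e.\ the $P_j$-energy can be made to dominate a \emph{small multiple} of the shallower energy in addition to the deeper energy. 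Then a backward induction in $j$ threads these $\delta$'s into weights $\delta_1\leq\cdots\leq\delta_n=1$ so that $-\Lambda'(\sigma)D(e_j-(n-1)e_j')\not\preceq 0$ for all $j$, with $D=\mathrm{diag}(\delta_j)$; setting $c_j=1/\delta_j$ and using convexity then yields $\Lambda(\tau)\preceq\Lambda(\sigma)\Rightarrow c^T\tau>c^T\sigma$, from which (a), (b), and injectivity of $\Lambda$ follow as you outline. Your quantitative part (b) and your injectivity of $\Lambda$ both rest on the componentwise claim and therefore inherit the gap; only your argument for injectivity of $\Lambda'(\sigma)$ survives as written.
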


Let us stress that the constants $c\in \mathbb{R}^n_+$, and $\lambda>0$, in Theorem~\ref{thm:main_infinite} only depend
on the domain $\Omega\subset \mathbb{R}^d$, the pixel partition $P_1,\ldots,P_n$ and the a-priori known bounds $b>a>0$. Moreover, note that \eqref{eq:sd_program_nonoise_infinite}, and \eqref{eq:sd_program_noise_infinite}, are indeed convex optimization problems since 
$\Lambda$ is convex with respect to the Loewner order so that the admissible sets
in \eqref{eq:sd_program_nonoise_infinite}, and \eqref{eq:sd_program_noise_infinite} are closed convex sets (cf.\ Corollary~\ref{cor:inf_meas_set_convex}). The admissible sets in \eqref{eq:sd_program_nonoise_infinite}, and \eqref{eq:sd_program_noise_infinite} are also non-empty since, in both cases, they contain $\hat \sigma$.

\begin{remark}\label{rem:Lipschitz_infinite}
Our arguments also yield the Lipschitz stability result
\[
\norm{\sigma_1-\sigma_2}_{c,\infty} \leq \frac{(n-1)}{\lambda} \norm{\Lambda(\sigma_1)-\Lambda(\sigma_2)}_{\LL(L^2_\diamond(\Sigma))}
\quad \text{ for all } \sigma_1,\sigma_2\in [a,b]^n.
\]
\end{remark}

\subsection{The case of finitely many measurements}
\label{subsect:result_finite}
 The equivalent convex reformulation
is also possible for the case of finitely many measurements. Let $g_1,g_2,\ldots\subseteq  L^2_\diamond(\Sigma)$ be given with dense span in $L^2_\diamond(\Sigma)$.
For some number of measurements $m\in \N$, we assume that we can measure the Galerkin projection 
of $\Lambda(\sigma)$ to the span of $\{g_1,\ldots,g_m\}$, i.e., that we can measure
$\int_{\Sigma} g_j \Lambda(\sigma) g_k \dx[s]$, for all $j,k=1,\ldots,m$. 
Accordingly, we define the matrix-valued forward operator
\begin{equation}\label{eq:def_Fm}
F_m:\ \mathbb{R}^n_+\to \sS_m\subset \mathbb{R}^{m\times m}, \quad F(\sigma):=\left(\int_{\Sigma} g_j \Lambda(\sigma) g_k \dx[s]\right)_{j,k=1,\ldots,m},
\end{equation}
where $\sS_m\subset \mathbb{R}^{m\times m}$ denotes the space of symmetric matrices.

Then, the problem of reconstructing an unknown conductivity on a fixed partition
with known bounds from finitely many measurements can be formulated as the problem to
\[
\text{reconstruct } \quad \sigma\in [a,b]^n\subset \mathbb{R}^n_+ \quad \text{ from } \quad F_m(\sigma)\in \sS_m.
\]
As in the infinite-dimensional case, we write ``$\preceq$'' for the Loewner order on the space of symmetric matrices $\sS_m$, i.e., for all $A,B\in \sS_m$,
\[
A\preceq B \quad \text{ denotes that } \quad x^T(B-A)x\geq 0 \quad \text{ for all } x\in \mathbb{R}^m.
\]
Also, for $A\in \sS_m$, the largest eigenvalue is denoted by $\lambda_{\max}(A)$, and we have that
\begin{equation*}\label{eq:lambda_max_equiv_finite}
A\not\preceq 0 \quad \text{ if and only if } \quad \lambda_{\max}(A)>0. 
\end{equation*}

Our second main result is that also the Calder\'on problem with finitely many measurements can be equivalently reformulated as a uniquely solvable convex non-linear semi-definite program, provided that sufficiently many measurements are being taken. Again, our arguments also yield unique solvability of the Calder\'on problem and its linearized version, and we include this as part of our theorem for the sake of completeness. Note that this uniqueness result has already been shown in 
\cite{harrach2019uniqueness} by arguments closely related to those in this work.

\begin{theorem}\label{thm:main_finite}
If the number of measurements $m\in \N$ is sufficiently large, then: 
\begin{enumerate}[(a)]
\item The non-linear mapping 
\[
F_m:\ [a,b]^n\to \sS_m\subseteq \mathbb{R}^{m\times m}
\]
is injective on $[a,b]^n$, and its Fr\'echet derivative $F'(\sigma)\in \LL(\mathbb{R}^n_+,\sS_m)$ is injective for all $\sigma\in [a,b]^n$.
\item There exists $c\in \mathbb{R}^n_+$, and $\lambda>0$, so that for all $\hat \sigma\in [a,b]^n$, and $\hat Y:=F_m(\hat \sigma)$, the following holds:
\begin{enumerate}[(i)]
\item The convex semi-definite optimization problem
\begin{equation}\label{eq:sd_program_nonoise}
\verb|minimize |\ c^T \sigma\ \verb| subject to | \ \sigma\in [a,b]^n,\ F_m(\sigma)\preceq \hat Y,
\end{equation}
possesses a unique minimizer and this minimizer is $\hat\sigma$.
\item Given $\delta>0$, and $Y^\delta\in \sS_m$ with $\norm{Y^\delta-\hat Y}_{2}\leq \delta$,
the convex semi-definite optimization problem
\begin{equation}\label{eq:sd_program_noise}
\verb|minimize |\ c^T \sigma\ \verb| subject to | \ \sigma\in [a,b]^n,\ F_m(\sigma)\preceq Y^\delta+\delta I,
\end{equation}
possesses a minimizer $\sigma^\delta$. Every such minimizer $\sigma^\delta$ fulfills
\[
 \norm{\sigma^\delta-\hat \sigma}_{c,\infty} \leq \frac{2(n-1)}{\lambda}\delta.
\]
where the weighted maximum norm $\norm{\cdot}_{c,\infty}$ is defined as in Theorem~\ref{thm:main_infinite}.
\end{enumerate}
\end{enumerate}
\end{theorem}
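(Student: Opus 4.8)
The plan is to deduce Theorem~\ref{thm:main_finite} from the infinite-data result of Theorem~\ref{thm:main_infinite} by a compactness-plus-perturbation argument, exploiting that $F_m(\sigma)$ converges to $\Lambda(\sigma)$ (in the appropriate sense) as $m\to\infty$. The essential structural facts carry over directly: since the Galerkin projection $P_m:L^2_\diamond(\Sigma)\to \operatorname{span}\{g_1,\ldots,g_m\}$ satisfies $x^T F_m(\sigma)x=\int_\Sigma (\sum_k x_k g_k)\Lambda(\sigma)(\sum_k x_k g_k)\dx[s]$, the Loewner convexity and monotonicity of $\Lambda$ established in the infinite-dimensional setting (the basis of Theorem~\ref{thm:main_infinite}, via localized potentials) transfer verbatim to $F_m$. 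Hence, for every fixed $m$, the admissible set in \eqref{eq:sd_program_nonoise} is closed and convex, so the optimization problems are genuinely convex and the minimizers in (b) exist by continuity of $F_m$ and compactness of $[a,b]^n$.

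First I would prove part (a), the injectivity of $F_m$ and of $F_m'(\sigma)$ for $m$ large. The infinite-data versions --- injectivity of $\Lambda$ and of $\Lambda'(\sigma)$ on $\mathbb{R}^n_+$ --- are asserted in Theorem~\ref{thm:main_infinite}. The key quantitative ingredient I expect to extract from their proof is a \emph{lower bound via a fixed finite number of test functions}: the localized-potentials machinery shows that for each pixel one can detect a conductivity difference using a specific boundary current, and by a continuity/compactness argument over the compact set $[a,b]^n$ one obtains finitely many currents $g_{j_1},\ldots,g_{j_N}$ whose associated quadratic forms already separate all conductivities. Since $\{g_k\}$ has dense span, for $m$ large enough the span of $\{g_1,\ldots,g_m\}$ approximates these separating currents arbitrarily well, and a standard perturbation estimate upgrades the strict inequalities to hold for the projected forms. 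This yields injectivity of $F_m$; the same argument applied to the derivative (whose localized-potential lower bound is uniform on the compact parameter set) gives injectivity of $F_m'(\sigma)$.

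Next I would establish part (b). The crucial observation is that the constants $c\in\mathbb{R}^n_+$ and $\lambda>0$ from Theorem~\ref{thm:main_infinite} arise from the localized-potentials lower bounds, which are realized (up to arbitrarily small error) by finitely many boundary currents. Concretely, I expect the proof of Theorem~\ref{thm:main_infinite} to produce, for each pixel index $j$, a current $g^{(j)}$ with a strict one-sided estimate of the form $\int_\Sigma g^{(j)}(\Lambda(\sigma)-\Lambda(\sigma'))g^{(j)}\dx[s]\geq \tfrac{\lambda}{c_j}(\sigma_j-\sigma_j')$ whenever $\sigma,\sigma'$ differ only in a monotone-compatible way. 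Approximating each $g^{(j)}$ by its projection onto $\operatorname{span}\{g_1,\ldots,g_m\}$ transfers these inequalities to $F_m$ with a slightly weakened constant $\lambda_m\to\lambda$; choosing $m$ large fixes $\lambda_m\geq\lambda/2$ (or any desired fraction), which is what feeds the error estimate. With these monotonicity-based separating estimates in hand, the optimality/uniqueness argument for \eqref{eq:sd_program_nonoise} and the noise estimate for \eqref{eq:sd_program_noise} are then formally identical to the infinite-dimensional case: the constraint $F_m(\sigma)\preceq\hat Y=F_m(\hat\sigma)$ forces $\sigma\le\hat\sigma$ pixelwise through the monotonicity estimates propagated from the boundary inward (using the nested sets $Q_j$), and minimizing $c^T\sigma$ then pins down $\sigma=\hat\sigma$; the $\delta$-relaxed constraint yields the stated $O(\delta)$ bound by the same propagation with an additive error of size proportional to $\delta$ at each of the $n-1$ steps.

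The main obstacle I anticipate is making the passage from infinitely many to finitely many measurements \emph{uniform} in the unknown $\hat\sigma\in[a,b]^n$: one needs a single threshold $m$ that works simultaneously for all admissible conductivities, rather than an $m$ depending on the (unknown) target. This is where compactness of $[a,b]^n$ and continuity of $\sigma\mapsto\Lambda(\sigma)$ (in operator norm) are indispensable --- they let me replace the pointwise localized-potential estimates by uniform ones, and guarantee that the finite-rank approximation error $\norm{F_m(\sigma)-P_m\Lambda(\sigma)P_m}$ is controlled uniformly. A secondary technical point is that the localized-potential currents $g^{(j)}$ depend continuously on $\sigma$, so the approximation quality of their projections must be controlled uniformly over the compact parameter set; handling this cleanly, while tracking how the constant degrades under projection, is the part requiring the most care.
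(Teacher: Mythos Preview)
Your high-level strategy --- transfer the infinite-dimensional estimates to $F_m$ via density of $\operatorname{span}\{g_1,\ldots,g_m\}$ plus compactness of $[a,b]^n$ --- matches the paper's. The structural observations (monotonicity and Loewner convexity pass to $F_m$ via $v^TF_m(\sigma)v=\int_\Sigma g\,\Lambda(\sigma)g\,\mathrm{d}s$) are also exactly right.

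Where your proposal goes wrong is in the \emph{mechanism} of the converse step. You write that the constraint $F_m(\sigma)\preceq F_m(\hat\sigma)$ ``forces $\sigma\le\hat\sigma$ pixelwise through the monotonicity estimates propagated from the boundary inward,'' and that minimizing $c^T\sigma$ then pins down $\hat\sigma$. This is both internally inconsistent and not what the paper proves. First, if the feasible set were contained in $\{\sigma\le\hat\sigma\}$ with $c>0$, then $\hat\sigma$ would be the \emph{maximizer} of $c^T\sigma$, not the minimizer. Second, and more importantly, the Loewner inequality $\Lambda(\sigma)\preceq\Lambda(\hat\sigma)$ simply does \emph{not} imply any componentwise ordering of $\sigma$ and $\hat\sigma$; the localized potentials are asymmetric (they control the energy in $P_j$ against the energy only in $Q_j=\bigcup_{i>j}\overline{P_i}$, saying nothing about pixels $i<j$). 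What the paper actually establishes is the much weaker but sufficient scalar implication
\[
\Lambda(\tau)\preceq\Lambda(\sigma),\ \tau\neq\sigma \quad\Longrightarrow\quad c^T\tau>c^T\sigma,
\]
and this comes from a directional-derivative definiteness property: for a carefully constructed diagonal scaling $D=\operatorname{diag}(\delta_1,\ldots,\delta_n)$ (built by backward recursion, which is where the nested sets $Q_j$ and the ``$n-1$'' enter), one has $-\Lambda'(\sigma)D(e_j-(n-1)e_j')\not\preceq 0$ uniformly on $[a,b]^n$. It is \emph{this} definiteness inequality, not any pixelwise estimate tied to a fixed current $g^{(j)}$, that gets transferred to $F_m$ by the density/compactness argument. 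Once transferred, a purely algebraic lemma converts it into the scalar converse-monotonicity above, and the optimality and noise estimates follow exactly as in the infinite-data proof. Your proposed estimate of the form $\int_\Sigma g^{(j)}(\Lambda(\sigma)-\Lambda(\sigma'))g^{(j)}\,\mathrm{d}s\geq\tfrac{\lambda}{c_j}(\sigma_j-\sigma_j')$, with a single fixed test current per pixel, is stronger than what the localized-potentials theory delivers, and the final ``propagation'' argument you sketch would not go through.
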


The constants $c\in \mathbb{R}^n_+$, and $\lambda>0$, and also the number of measurements in Theorem~\ref{thm:main_finite} only depend on the domain $\Omega\subset \mathbb{R}^d$, the pixel partition $P_1,\ldots,P_n$ and the a-priori known bounds $b>a>0$. Also, as in the
case of infinitely many measurements, \eqref{eq:sd_program_nonoise} and \eqref{eq:sd_program_noise} are linear optimization problems
over non-empty, closed, and convex, feasibility sets, and our arguments also yield the Lipschitz stability result
\begin{equation}\label{eq:Lipschitz_finite}
\norm{\sigma_1-\sigma_2}_{c,\infty} \leq \frac{(n-1)}{\lambda} \norm{F_m(\sigma_1)-F_m(\sigma_2)}_2
\quad
\forall \sigma_1,\sigma_2\in [a,b]^n.
\end{equation}

\section{Proof of the main results}

In the following, the $j$-th unit vector is denoted by $e_j\in \mathbb{R}^n$.
We write $\1:=(1,1,\ldots,1)^T\in \mathbb{R}^n$ for the vector containing only ones,
and we write $e_j':=\1-e_j$ for the vector containing ones in all entries 
except the $j$-th. We furthermore split $e_j'=e_j^+ + e_j^-$, where 
\[
e_j^+:=\sum_{i=j+1,\ldots,n} e_i, \quad \text{ and } \quad e_j^-:=\sum_{i=1,\ldots,j-1} e_i.
\]
Note that we use the usual convention of empty sums being zero, so that
$e_n^+=0\in \mathbb{R}^n$, and $e_1^{-}=0\in \mathbb{R}^n$. 

\subsection{The case of infinitely many measurements}\label{Subsec:proof_inf_data}

In this subsection, we will prove Theorem~\ref{thm:main_infinite}, where the measurements
are given by the infinite-dimensional Neumann-Dirichlet operator $\Lambda(\sigma)\in \LL(L^2_\diamond(\Sigma))$. 

Let us sketch the main ideas and outline the proof first. 
We will start by summarizing some known results on the monotonicity and convexity of the forward mapping
\[
\Lambda:\ \mathbb{R}^n_+\to \LL(L^2_\diamond(\Sigma)).
\]
This will show that the admissible sets of the optimization problems \eqref{eq:sd_program_nonoise_infinite} and \eqref{eq:sd_program_noise_infinite} are indeed convex sets. It also yields the monotonicity property that, for all $\sigma,\tau\in \mathbb{R}^n_+$,
\[
\tau\geq \sigma \quad \text{ implies } \quad \Lambda(\tau)\preceq \Lambda(\sigma).
\] 
Using localized potentials arguments in the form of directional derivatives, we then 
derive a converse monotonicity result showing that 
\begin{equation}\label{eq:inf_data_converse_mon}
\exists c\in \mathbb{R}^n_+: \quad \Lambda(\tau)\preceq \Lambda(\sigma) \quad \text{ implies } \quad c^T \tau\geq c^T \sigma,
\end{equation} 
for all $\sigma,\tau\in [a,b]^n$. Clearly, this implies that $\hat \sigma$ is a minimizer of the optimization problem \eqref{eq:sd_program_nonoise_infinite}. By proving a slightly stronger variant of \eqref{eq:inf_data_converse_mon}, we also obtain uniqueness of the minimizer and the error estimate in Theorem~\ref{thm:main_infinite}(b).

\paragraph{Monotonicity, convexity, and localized potentials} We collect several known properties of the forward mapping in the following lemma. We also rewrite the localized potentials arguments from \cite{gebauer2008localized,harrach2013monotonicity} as a definiteness property of the directional derivatives of $\Lambda$. The latter will be the basis for proving a converse monotonicity result in the next subsection.

\begin{lemma}\label{lemma:properties_Lambda}
\begin{enumerate}[(a)]
\item $\Lambda$ is Fr\'echet differentiable with continuous derivative 
$$
\Lambda':\ \mathbb{R}^n_+\to \LL(\mathbb{R}^n,\LL(L^2_\diamond(\Sigma))).
$$
$\Lambda'(\sigma)d\in \LL(L^2_\diamond(\Sigma))$ is compact and selfadjoint for all $\sigma\in \mathbb{R}^n_+$, and $d\in \mathbb{R}^n$. Moreover,
\begin{alignat}{2}
\label{eq:monoton_deriv} \Lambda'(\sigma)d&\preceq 0 \quad && \text{for all $\sigma\in \mathbb{R}^n_+$, $0\leq d\in \mathbb{R}^n$,}\\
\label{eq:convex_deriv} \Lambda(\tau)-\Lambda(\sigma)&\succeq \Lambda'(\sigma) (\tau-\sigma) \quad && \text{for all $\sigma,\tau\in \mathbb{R}^n_+$.}
\end{alignat}
\item $\Lambda$ is monotonically decreasing, i.e.
for all $\sigma,\tau\in \mathbb{R}^n_+$
\begin{align}\label{eq:monotonicity}
\sigma\geq \tau \quad \text{ implies } \quad \Lambda(\sigma)\preceq \Lambda(\tau).
\end{align}
Also, for all $\sigma\in \mathbb{R}^n_+$, and $d,\tilde d\in \mathbb{R}^n$,
\begin{align}\label{eq:monotonicity_deriv}
d\geq \tilde d \quad \text{ implies } \quad \Lambda'(\sigma)d\preceq \Lambda'(\sigma)\tilde d.
\end{align}
\item $\Lambda$ is convex, i.e.
for all $\sigma,\tau\in \mathbb{R}^n_+$, and $t\in [0,1]$,
\begin{align}
\Lambda(t \sigma + (1-t) \tau)\preceq t\Lambda(\sigma) + (1-t) \Lambda(\tau).
\end{align}
\item For all $C>0$, $j\in \{1,\ldots,n\}$, and $\sigma\in \mathbb{R}^n_+$, it holds that
\begin{align}
\Lambda'(\sigma)(e_j - C e_j^+)\not\succeq 0.\label{eq:dir_deriv}
\end{align}
\end{enumerate}
\end{lemma}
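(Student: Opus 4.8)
Parts (a)--(c) are standard and I would treat them quickly, leaving part (d) as the real content. For (a), the Fréchet differentiability of $\Lambda$ with the stated derivative is classical elliptic-PDE theory: since $\sigma\mapsto u_\sigma^g$ depends smoothly on $\sigma\in L^\infty_+(\Omega)$, one differentiates the weak form of \eqref{eq:EIT} to obtain that $u'=(\Lambda'(\sigma)d)g$ solves the linearized equation with source $-\nabla\cdot(d\,\nabla u_\sigma^g)$, and the compact selfadjoint structure is inherited from $\Lambda$. The two key inequalities \eqref{eq:monoton_deriv} and \eqref{eq:convex_deriv} follow from the well-known \emph{monotonicity} and \emph{convexity} estimates of Ptolemy/Kang--Seo--Sheen type: writing the bilinear form $\int_\Sigma g\,\Lambda'(\sigma)d\,g\,\dx[s]$ explicitly, one obtains $-\int_\Omega d\,|\nabla u_\sigma^g|^2\,\dx$, which is $\le 0$ when $d\ge0$, giving \eqref{eq:monoton_deriv}. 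Inequality \eqref{eq:convex_deriv} is the standard ``convexity of $\sigma\mapsto\Lambda(\sigma)$'' estimate obtained by comparing the energy of $u_\sigma$ and $u_\tau$; it immediately gives convexity (c) and, combined with \eqref{eq:monoton_deriv}, the monotonicity statements in (b), including \eqref{eq:monotonicity_deriv} by linearity of $d\mapsto\Lambda'(\sigma)d$.

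The crux is part (d), the \emph{strict} converse-type statement \eqref{eq:dir_deriv}, and here the localized-potentials technique enters. I want to show that $\Lambda'(\sigma)(e_j-Ce_j^+)\not\succeq 0$, i.e.\ that there exists a boundary current $g$ with
\[
\int_\Sigma g\,\Lambda'(\sigma)(e_j-Ce_j^+)\,g\,\dx[s]<0.
\]
Using the explicit form of the derivative this quantity equals $\int_{P_j}|\nabla u_\sigma^g|^2\dx[]\;-\;C\sum_{i>j}\int_{P_i}|\nabla u_\sigma^g|^2\dx[]$ up to sign; the goal is therefore to produce a potential $u_\sigma^g$ whose energy is \emph{large on $P_j$ but arbitrarily small on $Q_j=\bigcup_{i>j}\overline{P_i}$}. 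This is exactly what the localized potentials result of \cite{gebauer2008localized,harrach2013monotonicity} provides, and this is where the geometric hypothesis on the pixel ordering is essential: the assumption that the complement of $Q_j$ in $\overline\Omega$ is connected and touches $\Sigma$ on a relatively open set is precisely the runnability condition for localized potentials that lets one concentrate $\int_{P_j}|\nabla u_\sigma^g|^2$ while simultaneously forcing $\int_{Q_j}|\nabla u_\sigma^g|^2\to0$.

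Concretely, I would invoke the localized-potentials lemma to obtain a sequence $g_k$ with $\int_{P_j}|\nabla u_\sigma^{g_k}|^2\to\infty$ and $\int_{Q_j}|\nabla u_\sigma^{g_k}|^2\to0$; for $k$ large the bracketed difference is strictly positive regardless of the fixed constant $C>0$, which yields \eqref{eq:dir_deriv}. \textbf{The main obstacle} is verifying that the geometric numbering hypothesis genuinely feeds the localized-potentials machinery in the required ``separating'' form --- one must check that $P_j$ and $Q_j$ can be separated by a set reaching $\Sigma$ so that energy can be pumped into $P_j$ while being drained from the deeper pixels. I expect the cleanest route is to quote the localized-potentials theorem in its abstract form (energy concentration on one region with simultaneous decay on a disjoint region whose complement is connected to $\Sigma$) and to check its hypotheses against $P_j$ versus $Q_j$, rather than re-deriving the Runge-approximation argument from scratch. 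The fact that $C$ is arbitrary but fixed is harmless, since the localization makes the $P_j$-energy dominate any fixed multiple of the vanishing $Q_j$-energy.
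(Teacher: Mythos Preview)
Your proposal is correct and follows essentially the same route as the paper: parts (a)--(c) are dispatched via the explicit bilinear form $\int_\Sigma g\,(\Lambda'(\sigma)d)\,g\,\mathrm{d}s=-\int_\Omega d\,|\nabla u_\sigma^g|^2\,\mathrm{d}x$ together with the standard convexity inequality, and part (d) is obtained by applying the localized-potentials result of \cite{gebauer2008localized,harrach2013monotonicity} to produce a sequence $(g_k)$ with $\int_{P_j}|\nabla u_\sigma^{g_k}|^2\to\infty$ and $\int_{Q_j}|\nabla u_\sigma^{g_k}|^2\to 0$, using exactly the geometric ordering hypothesis on the pixels to verify the connectivity assumption. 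The only cosmetic remark is that the attribution ``Ptolemy/Kang--Seo--Sheen'' is nonstandard; the paper simply cites \cite{harrach2010exact} for \eqref{eq:convex_deriv}.
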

\begin{proof}
It is well known that $\Lambda$ is continuously Fr\'echet differentiable (cf., e.g., \cite[Section~2]{lechleiter2008newton}, or \cite[Appendix~B]{garde2017convergence}).
Given some direction $d\in \mathbb{R}^n$ the derivative 
\[
\Lambda'(\sigma)d:\ \Ld^2(\partial \Omega)\to \Ld^2(\partial \Omega)
\]
is the selfadjoint compact linear operator associated to the bilinear form
\begin{equation*}\label{eq:Frechet_bilinearForm}
\int_\Sigma g \left(\Lambda'(\sigma)d\right) h \dx[s]
= -\int_\Omega d(x)\, \nabla u^g_\sigma(x)
\cdot \nabla u^h_\sigma(x) \dx,
\end{equation*}
where, again, we identify $d\in \mathbb{R}^n$ with the piecewise constant function 
\[
d(x)=\sum_{j=1}^n d_j \chi_{P_j}(x):\ \Omega\to \R.
\]
$u^g_\sigma$, resp., $u^h_\sigma$ denote the solutions of \eqref{eq:EIT} with Neumann boundary data $g$, resp., $h$. Clearly, this implies \eqref{eq:monoton_deriv}.
Assertion \eqref{eq:convex_deriv} is shown in \cite[Lemma~2.1]{harrach2010exact}, so that (a) is proven. 

The monotonicity results \eqref{eq:monotonicity} and \eqref{eq:monotonicity_deriv} in (b) immediately follow from \eqref{eq:convex_deriv} and \eqref{eq:monoton_deriv}. 

The convexity result in (c) follows from \eqref{eq:convex_deriv} by a standard argument, cf., e.g., \cite[Lemma~2]{harrach2021introduction}.

To prove (d), let $j\in \{1,\ldots,n\}$. Then, for all $g\in \Ld^2(\partial \Omega)$,
\begin{align*}
-\int_\Sigma g \Lambda'(\sigma)(e_j - C e_j^+)g\dx[s]=\int_{P_j} \vert\nabla u^g_\sigma\vert^2\dx - C \int_{Q_j} \vert\nabla u^g_\sigma\vert^2\dx,
\end{align*}
where $Q_j=\bigcup_{i>j} \overline{P_i}$. Since $P_j$ is open and disjoint to $Q_j$, and the complement of $Q_j$ is connected to $\Sigma$, we can apply the localized potentials result in \cite[Thm.~3.6, and Section~4.3]{harrach2013monotonicity} to obtain a sequence of boundary currents $(g_k)_{k\in \N}$ with
\[
\int_{P_j} \vert\nabla u^{g_k}_\sigma\vert^2\dx \to \infty, \quad \text{ and } \quad \int_{Q_j} \vert\nabla u^{g_k}_\sigma\vert^2\dx \to 0.
\]
Hence, for sufficiently large $k\in \N$, 
\begin{align*}
-\int_\Sigma g_k \Lambda'(\sigma)(e_j - C e_j^+)g_k \dx[s]>0,
\end{align*}
which proves (d).
\end{proof}

\begin{corollary}\label{cor:inf_meas_set_convex} For every self-adjoint operator $Y\in \LL(L^2_\diamond(\Sigma))$, the set 
\[
\mathcal C:=\{ \sigma\in [a,b]^n:\ \Lambda(\sigma)\preceq Y\}
\]
is a closed convex set in $\mathbb{R}^n$.
\end{corollary}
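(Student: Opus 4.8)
The plan is to show that $\CC$ is the intersection of two sets, each of which is closed and convex, since finite intersections of closed convex sets inherit both properties. The first set is the box $[a,b]^n$, which is trivially closed and convex in $\mathbb{R}^n$. The second set is the sublevel set $\{\sigma\in\mathbb{R}^n_+:\ \Lambda(\sigma)\preceq Y\}$ arising from the semidefinite constraint, and all the real work lies in analyzing this set.

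For convexity of the second set, I would lean directly on the convexity of $\Lambda$ with respect to the Loewner order established in Lemma~\ref{lemma:properties_Lambda}(c). Given $\sigma,\tau$ with $\Lambda(\sigma)\preceq Y$ and $\Lambda(\tau)\preceq Y$, and $t\in[0,1]$, the convexity inequality gives
\[
\Lambda(t\sigma+(1-t)\tau)\preceq t\Lambda(\sigma)+(1-t)\Lambda(\tau)\preceq tY+(1-t)Y=Y,
\]
where the second step uses that the Loewner order is preserved under nonnegative linear combinations, together with $tY+(1-t)Y=Y$. This places $t\sigma+(1-t)\tau$ back in the set, giving convexity. The key point is that the order relation $A\preceq B$ is defined by nonnegativity of the quadratic form $g\mapsto\int_\Sigma g(B-A)g\dx[s]$, which is additive and positively homogeneous in $B-A$, so the two $\preceq$ relations combine cleanly.

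For closedness, I would argue via sequential continuity. Suppose $\sigma^{(k)}\to\sigma$ in $[a,b]^n$ with $\Lambda(\sigma^{(k)})\preceq Y$ for each $k$. By Lemma~\ref{lemma:properties_Lambda}(a), $\Lambda$ is (Fr\'echet differentiable, hence) continuous as a map into $\LL(L^2_\diamond(\Sigma))$, so $\Lambda(\sigma^{(k)})\to\Lambda(\sigma)$ in operator norm. For any fixed $g\in L^2_\diamond(\Sigma)$, the functional $A\mapsto\int_\Sigma g(Y-A)g\dx[s]$ is continuous in $A$ with respect to the operator norm, so $\int_\Sigma g(Y-\Lambda(\sigma))g\dx[s]=\lim_k\int_\Sigma g(Y-\Lambda(\sigma^{(k)}))g\dx[s]\geq 0$. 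Since $g$ was arbitrary, $\Lambda(\sigma)\preceq Y$, and the limit point lies in the set.

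I do not anticipate a serious obstacle here; this is a routine verification, and the heavy lifting (namely, the convexity of $\Lambda$ in the Loewner order) has already been done in Lemma~\ref{lemma:properties_Lambda}(c). The only point requiring a little care is ensuring that the closedness argument uses the correct topology—operator-norm continuity of $\Lambda$ on the finite-dimensional parameter space—and that I pass to the limit inside the quadratic form one test function $g$ at a time rather than attempting a uniform limit, which is unnecessary. Finally, I would note that the box $[a,b]^n\subset\mathbb{R}^n_+$ keeps us inside the domain of $\Lambda$, so the intersection is well defined.
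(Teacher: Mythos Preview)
Your proof is correct and takes essentially the same approach as the paper: the paper's proof is a one-liner stating that the result ``follows immediately from the continuity and convexity of $\Lambda$, which was shown in part (a) and (c) of Lemma~\ref{lemma:properties_Lambda},'' and you have simply written out the standard verification that continuity gives closedness (via sequential limits in the quadratic form) and Loewner-convexity gives convexity of the sublevel set.
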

\begin{proof}
This follows immediately from the continuity and convexity of $\Lambda$, which was shown in part (a) and (c) of Lemma~\ref{lemma:properties_Lambda}.
%
\end{proof}

\paragraph{A converse monotonicity result}
We will now utilize the properties of the directional derivatives \eqref{eq:dir_deriv} in order to derive the converse monotonicity result \eqref{eq:inf_data_converse_mon} (or, more precisely, a slightly stronger version of it). The following arguments stem on the ideas of \cite{harrach2021uniqueness,harrach2021solving}, where \eqref{eq:inf_data_converse_mon} is proven with $c:=\1$. The main technical difficulty compared to these previous works, is that the localized potentials results for the Calder\'on problem are weaker than those known for the Robin problem considered in \cite{harrach2021uniqueness,harrach2021solving}. This corresponds to the fact that \eqref{eq:dir_deriv} does not contain the term $e_j^-$. We overcome this difficulty by a compactness and scaling approach.

\begin{lemma}\label{lemma:compact_argument}
For all constants $C>0$, and $j\in \{1,\ldots,n\}$, there exists $\delta>0$, so 
that 
\begin{align}\label{eq:dir_deriv_delta}
-\Lambda'(\sigma)(e_j - \delta e_j^- - C e_j^+)\not\preceq 0
\quad \text{ for all } \sigma\in [a,b]^n.
\end{align}
\end{lemma}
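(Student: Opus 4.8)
The plan is to argue by contradiction and to upgrade the \emph{pointwise} (in $\sigma$) non-definiteness \eqref{eq:dir_deriv} of Lemma~\ref{lemma:properties_Lambda}(d) into a statement that is \emph{uniform} over the compact parameter set $[a,b]^n$, using the quadratic energy representation of the directional derivative together with a compactness argument. First I would rewrite \eqref{eq:dir_deriv_delta} in energy form. Writing $R_j:=\bigcup_{i<j}\overline{P_i}$ for the region associated with $e_j^-$, the bilinear-form identity from the proof of Lemma~\ref{lemma:properties_Lambda} gives, for every $g\in \Ld^2(\partial\Omega)$,
\[
-\int_\Sigma g\,\Lambda'(\sigma)(e_j-\delta e_j^- - Ce_j^+)g\dx[s]
= \int_{P_j}\vert\nabla u^g_\sigma\vert^2\dx - \delta\int_{R_j}\vert\nabla u^g_\sigma\vert^2\dx - C\int_{Q_j}\vert\nabla u^g_\sigma\vert^2\dx .
\]
Thus \eqref{eq:dir_deriv_delta} is equivalent to the existence, for each $\sigma\in[a,b]^n$, of some $g$ making the right-hand side strictly positive, and the point is to find one $\delta>0$ that works for all $\sigma$ simultaneously.

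Assume the assertion fails for the given $C$ and $j$. Then for every $\delta>0$ there is a counterexample, and choosing $\delta=1/k$ produces a sequence $\sigma_k\in[a,b]^n$ with $-\Lambda'(\sigma_k)(e_j-\tfrac1k e_j^- - Ce_j^+)\preceq 0$, i.e.
\[
\int_{P_j}\vert\nabla u^g_{\sigma_k}\vert^2\dx \le \tfrac1k\int_{R_j}\vert\nabla u^g_{\sigma_k}\vert^2\dx + C\int_{Q_j}\vert\nabla u^g_{\sigma_k}\vert^2\dx \qquad\text{for all }g .
\]
Since $[a,b]^n$ is compact, I would pass to a subsequence with $\sigma_k\to\sigma_*\in[a,b]^n$.

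Next I would fix an arbitrary $g$ and let $k\to\infty$. Continuous dependence of the solution of \eqref{eq:EIT} on the (piecewise constant, hence $L^\infty$-convergent) coefficient yields $u^g_{\sigma_k}\to u^g_{\sigma_*}$ in $\Hd^1(\Omega)$, so each of the three gradient-energy integrals converges; in particular $\int_{R_j}\vert\nabla u^g_{\sigma_k}\vert^2$ stays bounded, whence $\tfrac1k\int_{R_j}\vert\nabla u^g_{\sigma_k}\vert^2\to 0$. Passing to the limit leaves $\int_{P_j}\vert\nabla u^g_{\sigma_*}\vert^2\le C\int_{Q_j}\vert\nabla u^g_{\sigma_*}\vert^2$ for every $g$, i.e.\ $\Lambda'(\sigma_*)(e_j-Ce_j^+)\succeq 0$, contradicting Lemma~\ref{lemma:properties_Lambda}(d) at $\sigma_*$.

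The main obstacle is exactly this uniformity in $\sigma$: the localized-potentials result only furnishes, for each fixed $\sigma$, a sequence forcing the $P_j$-energy to dominate the $Q_j$-energy, with no control whatsoever on the $R_j$-energy (the potentials must ``pass through'' $R_j$ to concentrate in $P_j$), so one cannot simply choose $\delta$ small pointwise and hope it is uniform. The scaling role of the factor $1/k$ — shrinking the unfavorable $e_j^-$ coefficient while the energies remain finite by continuity — is what makes the $R_j$-term drop out in the compactness limit, reducing the claim to the already-established definiteness statement at the limit point $\sigma_*$. The one auxiliary point I would need to justify carefully is the continuous-dependence step $u^g_{\sigma_k}\to u^g_{\sigma_*}$ in $\Hd^1(\Omega)$, which is routine via Lax--Milgram with $L^\infty$-convergent coefficients and is implicit already in the Fr\'echet differentiability asserted in Lemma~\ref{lemma:properties_Lambda}(a).
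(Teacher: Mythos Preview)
Your proof is correct, but it takes a different route from the paper's.  The paper argues \emph{directly} rather than by contradiction: it introduces
\[
\psi(\sigma)\;:=\;\sup_{\norm{g}=1}\Bigl(-\int_\Sigma g\,\Lambda'(\sigma)(e_j-Ce_j^+)g\dx[s]\Bigr),
\]
observes that $\psi$ is lower semicontinuous (as a supremum of continuous functions, by Lemma~\ref{lemma:properties_Lambda}(a)) and strictly positive by Lemma~\ref{lemma:properties_Lambda}(d), hence attains a positive infimum $\epsilon>0$ on the compact set $[a,b]^n$.  Separately it bounds $S:=\sup_{\sigma\in[a,b]^n}\norm{\Lambda'(\sigma)e_j^-}_{\LL(L^2_\diamond(\Sigma))}<\infty$ and then sets $\delta:=\epsilon/(2S)$ explicitly, estimating the quadratic form from below by $\psi(\sigma)-\delta S\geq \epsilon/2>0$.

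The two arguments exploit the same ingredients---compactness of $[a,b]^n$, continuity of $\Lambda'$, and the pointwise statement~\eqref{eq:dir_deriv}---but package them differently.  Your sequential contradiction avoids the lower-semicontinuity step and the separate operator-norm bound, and reduces everything to the single continuous-dependence fact $u^g_{\sigma_k}\to u^g_{\sigma_*}$ in $H^1$; this is arguably cleaner.  The paper's approach, on the other hand, is constructive: it produces an explicit formula $\delta=\epsilon/(2S)$, which matters in the broader context of the paper (cf.\ the discussion of constructivity in the Conclusions).
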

\begin{proof}
Let $C>0$, and $j\in \{1,\ldots,n\}$. We define the functions
\begin{alignat*}{2}
\varphi:\ L_\diamond^2(\Sigma)\times \mathbb{R}^n_+ & \to \R,\quad & \varphi(g,\sigma)&:=-\int_\Sigma g \Lambda'(\sigma)(e_j - C e_j^+) g \dx[s],\\
\psi:\ \mathbb{R}^n_+ & \to \R,\quad &
\psi(\sigma)&:=\sup_{g\in L_\diamond^2(\Sigma),\ \norm{g}=1} \varphi(g,\sigma).
\end{alignat*}
Then \eqref{eq:dir_deriv} implies that $\psi(\sigma)>0$ for all $\sigma\in \mathbb{R}^n_+$.

Moreover, $\varphi$ is continuous by lemma \ref{lemma:properties_Lambda}, so that 
$\psi$ is lower semicontinuous. Hence, $\psi$ attains its minimum over the compact set $[a,b]^n$, and it follows that there exists $\epsilon>0$, so that
\[
\psi(\sigma)\geq \epsilon>0 \quad \text{ for all } \sigma\in [a,b]^n.
\]
By continuity and compactness, we also have that
\[
S:=\sup_{\sigma\in [a,b]^n}\norm{\Lambda'(\sigma)e_j^-}_{\LL(L^2_\diamond(\Sigma))}<\infty.
\]
Setting $\delta:=\frac{\epsilon}{2S}>0$, we now obtain, for all $\sigma\in [a,b]^n$,
\begin{align*}
\lefteqn{\sup_{g\in L_\diamond^2(\Sigma),\ \norm{g}=1} \left(-\int_\Sigma g \Lambda'(\sigma)(e_j - \delta e_j^- - C e_j^+) g \dx[s]\right)}\\
&\geq \sup_{g\in L_\diamond^2(\Sigma),\ \norm{g}=1} \varphi(g,\sigma)
- \delta \sup_{g\in L_\diamond^2(\Sigma),\ \norm{g}=1} \left\vert \int_\Sigma g \left(\Lambda'(\sigma)e_j^-\right) g \dx[s]\right\vert\\
&\geq \psi(\sigma) - \delta \norm{\Lambda'(\sigma)e_j^-}_{\LL(L^2_\diamond(\Sigma)}
\geq \epsilon - \delta S=\frac{\epsilon}{2}>0.
\end{align*}
This proves \eqref{eq:dir_deriv_delta}.
\end{proof}

\begin{lemma}\label{lemma:the_deltas}
There exist 
\begin{equation}\label{eq:delta_order}
0< \delta_1\leq \delta_2\leq \dots \leq \delta_{n-1}\leq \delta_n:=1,
\end{equation}
so that for all $\sigma\in [a,b]^n$, and all $j\in \{1,\ldots,n\}$,
\begin{equation}\label{eq:the_deltas_final}
-\Lambda'(\sigma)D(e_j-(n-1)e_j') \not\preceq 0.
\end{equation}
where $D\in \mathbb{R}^{n\times n}$ is the diagonal matrix with diagonal elements $\delta_1,\ldots,\delta_n>0$.
\end{lemma}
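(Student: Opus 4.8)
The plan is to deduce the simultaneous statement \eqref{eq:the_deltas_final} from the single-index result of Lemma~\ref{lemma:compact_argument} together with the monotonicity of the directional derivative \eqref{eq:monotonicity_deriv}, building the diagonal entries $\delta_1,\ldots,\delta_n$ by a downward recursion in $j$. First I would note that, since $\Lambda'(\sigma)$ is linear and each $\delta_j>0$, we have $-\Lambda'(\sigma)D(e_j-(n-1)e_j')=\delta_j\bigl(-\Lambda'(\sigma)d_j\bigr)$ with
\[
d_j := \tfrac{1}{\delta_j}D(e_j-(n-1)e_j') = e_j - (n-1)\sum_{i<j}\tfrac{\delta_i}{\delta_j}\,e_i - (n-1)\sum_{i>j}\tfrac{\delta_i}{\delta_j}\,e_i,
\]
so it suffices to prove $-\Lambda'(\sigma)d_j\not\preceq 0$. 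The engine is \eqref{eq:monotonicity_deriv}: if a direction $\tilde d$ satisfies $\tilde d\le d_j$ componentwise, then $-\Lambda'(\sigma)d_j\succeq -\Lambda'(\sigma)\tilde d$, whence $-\Lambda'(\sigma)\tilde d\not\preceq 0$ forces $-\Lambda'(\sigma)d_j\not\preceq 0$. I would therefore squeeze a direction $\tilde d=e_j-\delta^{(j)}e_j^- - C_j e_j^+$, for which Lemma~\ref{lemma:compact_argument} guarantees $-\Lambda'(\sigma)\tilde d\not\preceq 0$ uniformly in $\sigma\in[a,b]^n$, underneath $d_j$.

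Comparing coefficients, $\tilde d\le d_j$ amounts to two families of inequalities: $C_j\ge (n-1)\delta_i/\delta_j$ for $i>j$, and $\delta^{(j)}\ge (n-1)\delta_i/\delta_j$ for $i<j$. The first is harmless, since $C$ may be taken arbitrarily large in Lemma~\ref{lemma:compact_argument} and, by the intended ordering, $\delta_i\le \delta_n=1$ for $i>j$; thus fixing $C_j:=(n-1)/\delta_j$ suffices. The second is the delicate one, because the $\delta^{(j)}$ delivered by Lemma~\ref{lemma:compact_argument} is only known to be positive and may be very small, which forces the lower entries $\delta_i$ ($i<j$) to be correspondingly small relative to $\delta_j$. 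Using the ordering $\delta_1\le\cdots\le\delta_n$ (to be verified a posteriori), the largest ratio $\delta_i/\delta_j$ over $i<j$ is attained at $i=j-1$, so the second condition reduces to the single requirement $\delta_{j-1}\le \delta^{(j)}\delta_j/(n-1)$.

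This suggests the following downward recursion. Set $\delta_n:=1$. Having fixed $\delta_j$, put $C_j:=(n-1)/\delta_j$ (for $j=n$ the index set $\{i>j\}$ is empty and any $C_n$ works), invoke Lemma~\ref{lemma:compact_argument} to obtain the associated $\delta^{(j)}>0$, and then define
\[
\delta_{j-1} := \min\Bigl\{\delta_j,\ \tfrac{\delta^{(j)}\delta_j}{n-1}\Bigr\}>0.
\]
By construction $0<\delta_{j-1}\le\delta_j$, securing the ordering \eqref{eq:delta_order}, and the defining inequality gives $\delta_i\le\delta_{j-1}\le\delta^{(j)}\delta_j/(n-1)$ for every $i<j$. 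With these choices $\tilde d\le d_j$ holds for each $j$, and the monotonicity passage above yields $-\Lambda'(\sigma)d_j\not\preceq 0$, i.e.\ \eqref{eq:the_deltas_final}, uniformly for $\sigma\in[a,b]^n$.

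I expect the main obstacle to be organizing the quantifiers so that a \emph{single} diagonal $D$ works for all $j$ simultaneously. The difficulty is the circular coupling: $C_j$ depends on $\delta_j$, the lemma's $\delta^{(j)}$ depends on $C_j$, and $\delta^{(j)}$ in turn constrains the still-unfixed smaller entries. Processing the indices from $j=n$ downward breaks this circle, because each $\delta^{(j)}$ is only ever used to bound entries of smaller index that have not yet been chosen. The second point demanding care is that the binding inequalities need only be checked at the worst index $i=j-1$, which is precisely what the monotone ordering of the $\delta_i$ buys us; without that ordering one would have to control all pairs $(i,j)$ at once.
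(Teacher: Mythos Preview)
Your proposal is correct and follows essentially the same route as the paper's proof: a downward recursion in $j$ that invokes Lemma~\ref{lemma:compact_argument} at each step with the constant $C_j=(n-1)/\delta_j$, defines $\delta_{j-1}$ small enough via the resulting $\delta^{(j)}$, and then appeals to the monotonicity \eqref{eq:monotonicity_deriv} together with the ordering $\delta_1\le\cdots\le\delta_n=1$ to compare $D(e_j-(n-1)e_j')$ with the direction supplied by Lemma~\ref{lemma:compact_argument}. The paper merely organizes the argument into two separate steps---first the recursion establishing $-\Lambda'(\sigma)(\delta_j e_j-\delta_{j-1}(n-1)e_j^- -(n-1)e_j^+)\not\preceq 0$, then the monotonicity comparison---whereas you weave them together, but the content is the same.
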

\begin{proof}
\begin{enumerate}[(a)]
\item Set $C:=n-1$. We will first prove that there exist $\delta_j$, $j=0,\ldots,n$, 
that fulfill \eqref{eq:delta_order}, and
\begin{equation}\label{eq:the_deltas}
-\Lambda'(\sigma)\left( \delta_j e_j - \delta_{j-1}Ce_j^- - Ce_j^+\right)\not\preceq 0 \quad \forall j\in \{1,\ldots,n\}.
\end{equation}

To prove this, we start with $j=n$ and proceed backwards. By Lemma~\ref{lemma:compact_argument} there exists $0<\delta$ 
with
\[
-\Lambda'(\sigma)\left( e_n - \delta e_n^- - Ce_n^+\right)\not\preceq 0 \quad \text{ for all } \sigma\in [a,b]^n,
\]
and clearly we can choose $\delta \leq C$. Setting $\delta_n:=1$ and $\delta_{n-1}:=\frac{\delta}{C}\leq 1$ this yields that
\[
-\Lambda'(\sigma)\left( \delta_n e_n - \delta_{n-1} C e_n^- - Ce_n^+\right)\not\preceq 0 \quad \text{ for all } \sigma\in [a,b]^n,
\]
so that \eqref{eq:the_deltas} is proven for $j=n$.

Now assume that there exist
$0\leq \delta_{k-1}\leq \dots \leq \delta_n:=1$, so that
\eqref{eq:the_deltas} holds for all $j=k,\ldots,n$ with 
$1< k\leq n$. 
Using again Lemma~\ref{lemma:compact_argument}, we then obtain $0<\delta'\leq C$, so that
\[
-\Lambda'(\sigma)\left( e_{k-1} - \delta' e_{k-1}^- - \frac{C}{\delta_{k-1}} e_{k-1}^+\right)\not\preceq 0
\quad \text{ for all } \sigma\in [a,b]^n.
\]
With $\delta_{k-2}:=\frac{\delta_{k-1} \delta'}{C}\leq \delta_{k-1}$ this yields that
\[
-\Lambda'(\sigma)\left( \delta_{k-1} e_{k-1} - \delta_{k-2} C e_{k-1}^- - C e_{k-1}^+\right)\not\preceq 0,
\]
which proves \eqref{eq:the_deltas} for $j=k-1$. Hence, by induction, \eqref{eq:the_deltas} can be fulfilled for all $j\in \{1,\ldots,n\}$.
\item To see that \eqref{eq:the_deltas} also implies \eqref{eq:the_deltas_final}, note that
for $j\in \{1,\ldots,n\}$
\[
De_j=\delta_j e_j, \quad De_j^-\leq \delta_{j-1} e_j^-, \quad \text{ and } \quad
De_j^+\leq e_j^+.
\]
Hence, for all $\sigma\in [a,b]^n$, we obtain from \eqref{eq:the_deltas} and the monotonicity property \eqref{eq:monotonicity_deriv} that
\begin{align*}
-\Lambda'(\sigma)D(e_j-(n-1)e_j')
&\succeq 
-\Lambda'(\sigma)\left( \delta_j e_j - \delta_{j-1} (n-1) e_j^- - (n-1)e_j^+\right) 
\not\preceq 0,
\end{align*}
so that \eqref{eq:the_deltas_final} is proven for $j\in \{1,\ldots,n\}$.
\end{enumerate}
\end{proof}

\begin{lemma}\label{lemma:inverse_monotonicity_Lambda}
Let $D\in \mathbb{R}^{n\times n}$ be a diagonal matrix with entries $\delta_1,\ldots,\delta_n>0$,
and assume that, for some $\sigma\in \mathbb{R}^n_+$,
\begin{equation}\label{eq:sum_crit_general}
\Lambda'(\sigma)D((n-1)e_j'-e_j)\not\preceq 0 \quad \text{ for all } \quad  j\in \{1,\ldots,n\}.
\end{equation}
Then
\begin{equation}\label{eqref:def_lambda_sigma}
\lambda:=\min_{j=1,\ldots,n} \lambda_{\max}(\Lambda'(\sigma)D((n-1)e_j'-e_j))>0,
\end{equation}
and, for all $d\in \mathbb{R}^n$,
\begin{equation}\label{eq:conv_mon_sum_aux}
\lambda_{\max}(\Lambda'(\sigma)d)< \frac{\lambda \norm{D^{-1}d}_\infty}{n-1}
 \quad \text{ implies } \quad
\min_{j=1,\ldots,n} \frac{d_j}{\delta_j} > -\frac{1}{n-1} \max_{j=1,\ldots,n} \frac{d_j}{\delta_j}.
\end{equation}
\end{lemma}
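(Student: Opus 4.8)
The plan is to treat the two claims separately: the positivity of $\lambda$ is immediate, and the implication \eqref{eq:conv_mon_sum_aux} is the substance of the lemma.

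For the first claim, each operator $\Lambda'(\sigma)D((n-1)e_j'-e_j)$ is compact and selfadjoint by Lemma~\ref{lemma:properties_Lambda}(a), and by assumption \eqref{eq:sum_crit_general} it is not $\preceq 0$. Hence \eqref{eq:lambda_max_equiv} gives $\lambda_{\max}(\Lambda'(\sigma)D((n-1)e_j'-e_j))>0$ for each $j$, and since $\lambda$ in \eqref{eqref:def_lambda_sigma} is a minimum over the finite index set $\{1,\dots,n\}$, it is strictly positive.

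For the implication I would argue by contraposition. Abbreviate $y:=D^{-1}d$, so that $\norm{D^{-1}d}_\infty=\max_j|y_j|$ and the right-hand inequality in \eqref{eq:conv_mon_sum_aux} reads $\min_j y_j> -\tfrac{1}{n-1}\max_j y_j$. Assuming this fails, I pick an index $k$ attaining the minimum $y_k=\min_j y_j$; after disposing of the trivial case $d=0$, the failed inequality forces $y_k<0$. The decisive observation is that the negation $y_k\le -\tfrac{1}{n-1}\max_j y_j$ is exactly the condition under which the componentwise domination
\[
y\le t\,\big((n-1)e_k'-e_k\big),\qquad t:=-y_k>0,
\]
holds: in the $k$-th entry both sides equal $y_k$, while in every other entry $y_i\le \max_j y_j\le (n-1)(-y_k)=(n-1)t$. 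This is the heart of the argument.

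It then remains to transport definiteness from the special direction to $d$. Since $D$ has positive diagonal, the displayed inequality yields $Dy\le tD((n-1)e_k'-e_k)$ componentwise, and the monotonicity of the derivative \eqref{eq:monotonicity_deriv} gives $\Lambda'(\sigma)Dy\succeq t\,\Lambda'(\sigma)D((n-1)e_k'-e_k)$. Taking largest eigenvalues, using their monotonicity under the Loewner order and their positive homogeneity, I obtain
\[
\lambda_{\max}(\Lambda'(\sigma)d)=\lambda_{\max}(\Lambda'(\sigma)Dy)\ge t\,\lambda_{\max}(\Lambda'(\sigma)D((n-1)e_k'-e_k))\ge t\lambda=-y_k\lambda.
\]
Finally I would check $-y_k\ge \tfrac{1}{n-1}\max_j|y_j|$: writing $\max_j|y_j|=\max(\max_j y_j,-y_k)$ and combining $-y_k\ge \tfrac{1}{n-1}\max_j y_j$ (the negated hypothesis) with $n\ge 2$, this follows, so $\lambda_{\max}(\Lambda'(\sigma)d)\ge \tfrac{\lambda\norm{D^{-1}d}_\infty}{n-1}$, which is precisely the negation of the premise in \eqref{eq:conv_mon_sum_aux}.

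The main obstacle is conceptual rather than computational: recognizing that the threshold $-\tfrac{1}{n-1}$ in the conclusion is exactly calibrated to make the single comparison $y\le t((n-1)e_k'-e_k)$ feasible, so that the definiteness information available only for the $n$ special directions $(n-1)e_j'-e_j$ can be leveraged, via monotonicity, into a lower bound on $\lambda_{\max}(\Lambda'(\sigma)d)$ for arbitrary $d$. Everything else is routine use of the linearity of $\Lambda'(\sigma)$ and the order-monotonicity and homogeneity of $\lambda_{\max}$.
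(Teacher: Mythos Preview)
Your proof is correct and follows essentially the same contrapositive route as the paper: pick an index $k$ realizing the minimum of $D^{-1}d$, use the negated conclusion to dominate $D^{-1}d$ componentwise by a multiple of $(n-1)e_k'-e_k$, and then transport the Loewner bound via \eqref{eq:monotonicity_deriv}. The only cosmetic difference is that the paper first derives $-y_k\ge \tfrac{1}{n-1}\norm{D^{-1}d}_\infty$ and uses $t=\tfrac{\norm{D^{-1}d}_\infty}{n-1}$ as the scaling factor, whereas you take $t=-y_k$ and establish the comparison with $\norm{D^{-1}d}_\infty$ at the end; the arguments are otherwise identical.
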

\begin{proof}
Clearly, \eqref{eq:sum_crit_general} implies $\lambda>0$. 
We now prove \eqref{eq:conv_mon_sum_aux} by contraposition and assume that there exists an index $k\in \{1,\ldots,n\}$ with 
\[
\frac{d_k}{\delta_k}=\min_{j=1,\ldots,n} \frac{d_j}{\delta_j}\leq -\frac{1}{n-1} \max_{j=1,\ldots,n} \frac{d_j}{\delta_j}.
\]
We have that either 
\[
\norm{D^{-1} d}_\infty=\max_{j=1,\ldots,n} \frac{d_j}{\delta_j},\quad \text{ or } \quad
\norm{D^{-1} d}_\infty=-\min_{j=1,\ldots,n} \frac{d_j}{\delta_j}=-\frac{d_k}{\delta_k},
\]
and in both cases it follows that
\[
\frac{d_k}{\delta_k}\leq -\frac{1}{n-1}\norm{D^{-1} d}_\infty.
\]
This yields
\[
D^{-1} d\leq -\frac{1}{n-1}\norm{D^{-1} d}_\infty e_k + \norm{D^{-1}d}_\infty e_k'
=\frac{\norm{D^{-1} d}_\infty}{n-1} \left( (n-1)e_k' - e_k \right),
\]
and thus
\[
d\leq \frac{\norm{D^{-1} d}_\infty}{n-1} D\left( (n-1)e_k' - e_k \right).
\]

Hence, by monotonicity,
\[
\Lambda'(\sigma) d \succeq \frac{\norm{D^{-1} d}_\infty}{n-1} \Lambda'(\sigma)  D\left( (n-1)e_k' - e_k \right)
\]
and we then obtain from \eqref{eqref:def_lambda_sigma} that
\[
\lambda_{\max}(\Lambda'(\sigma) d)\geq \frac{\norm{D^{-1} d}_\infty}{n-1} \lambda,
\]
so that \eqref{eq:conv_mon_sum_aux} is proven.
\end{proof}

From the preceding lemmas, we can now deduce our converse montonicity result.
\begin{corollary}\label{cor:inf_meas_converse_monotonicity}
There exist $c\in\mathbb{R}^n_+$, and $\lambda>0$, so that the following holds.
\begin{enumerate}[(a)]
\item For all $\sigma\in [a,b]^n$, and $d\in \mathbb{R}^n$, 
\[
\lambda_{\max}(\Lambda'(\sigma)d)< \frac{\lambda \norm{d}_{c,\infty}}{n-1} \quad  \text{ implies } \quad c^T d> 0,
\]
and thus, a fortiori, for $d\neq 0$,
\[
\Lambda'(\sigma)d\preceq 0 \quad  \text{ implies } \quad c^T d> 0.
\]
\item For all $\sigma,\tau \in [a,b]^n$,
\[
\lambda_{\max}(\Lambda(\tau)-\Lambda(\sigma))<  \frac{\lambda \norm{\tau-\sigma}_{c,\infty}}{n-1}   \quad \text{ implies } \quad c^T \tau> c^T\sigma,
\]
and thus, a fortiori, for $\sigma\neq \tau$,
\[
\Lambda(\tau)\preceq \Lambda(\sigma) \quad \text{ implies } \quad c^T \tau > c^T \sigma.
\]
\item The non-linear forward mapping $\Lambda:\ \mathbb{R}^n_+\to \LL(\Ld^2(\Sigma))$ is injective.
\item For all $\sigma\in \mathbb{R}^n_+$, the linearized forward mapping $\Lambda'(\sigma)\in \LL(\mathbb{R}^n,\LL(\Ld^2(\Sigma)))$ is injective.
\end{enumerate}
\end{corollary}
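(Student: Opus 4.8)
The plan is to read off the constant vector $c$ and the scalar $\lambda$ from the two preceding lemmas, prove the linearized statement (a) by a purely algebraic manipulation of the conclusion of Lemma~\ref{lemma:inverse_monotonicity_Lambda}, and then deduce (b) from (a) using only the convexity estimate \eqref{eq:convex_deriv}. For the choice of $c$, I would take the diagonal matrix $D=\mathrm{diag}(\delta_1,\dots,\delta_n)$ provided by Lemma~\ref{lemma:the_deltas} and set $c_j:=1/\delta_j$; this is legitimate since every $\delta_j>0$, and it is tailored so that $\norm{d}_{c,\infty}=\norm{D^{-1}d}_\infty$ and $c^Td=\sum_{j}d_j/\delta_j=\1^TD^{-1}d$. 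Thus the weighted norm and the linear functional in the corollary coincide with the quantities appearing in Lemma~\ref{lemma:inverse_monotonicity_Lambda}.

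For the constant $\lambda$, first observe that Lemma~\ref{lemma:the_deltas} guarantees, for every $\sigma\in[a,b]^n$ and every $j$, that $\Lambda'(\sigma)D((n-1)e_j'-e_j)\not\preceq 0$, i.e.\ that hypothesis \eqref{eq:sum_crit_general} of Lemma~\ref{lemma:inverse_monotonicity_Lambda} holds at each $\sigma$. That lemma only yields a $\sigma$-dependent positive constant as in \eqref{eqref:def_lambda_sigma}, so to obtain the single $\lambda$ demanded by the corollary I would pass to the minimum over $[a,b]^n$: since $\Lambda'$ is continuous (Lemma~\ref{lemma:properties_Lambda}(a)) and $\lambda_{\max}(A)=\sup_{\norm{g}=1}\int_\Sigma gAg\dx[s]$ is $1$-Lipschitz in the operator norm (as a supremum of $1$-Lipschitz functionals), the map $\sigma\mapsto\min_j\lambda_{\max}(\Lambda'(\sigma)D((n-1)e_j'-e_j))$ is continuous and strictly positive on the compact set $[a,b]^n$, and I define $\lambda>0$ to be its minimum. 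With this $\lambda$, the implication \eqref{eq:conv_mon_sum_aux} holds simultaneously for all $\sigma\in[a,b]^n$. Statement (a) is then immediate: its hypothesis is precisely the hypothesis of \eqref{eq:conv_mon_sum_aux}, whose conclusion $\min_j d_j/\delta_j>-\tfrac{1}{n-1}\max_j d_j/\delta_j$ gives $c^Td\ge M+(n-1)m>0$, where $M:=\max_j d_j/\delta_j$ and $m:=\min_j d_j/\delta_j$ (isolate one term equal to $M$ and bound the remaining $n-1$ terms below by $m$). The ``a fortiori'' form follows because $\Lambda'(\sigma)d\preceq 0$ forces $\lambda_{\max}(\Lambda'(\sigma)d)\le 0<\lambda\norm{d}_{c,\infty}/(n-1)$ whenever $d\neq 0$.

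To obtain (b) I would invoke the convexity estimate \eqref{eq:convex_deriv}, which gives $\Lambda'(\sigma)(\tau-\sigma)\preceq\Lambda(\tau)-\Lambda(\sigma)$ and hence $\lambda_{\max}(\Lambda'(\sigma)(\tau-\sigma))\le\lambda_{\max}(\Lambda(\tau)-\Lambda(\sigma))$ by monotonicity of $\lambda_{\max}$ under $\preceq$; applying (a) with $d:=\tau-\sigma$ then yields $c^T\tau>c^T\sigma$, and the ``a fortiori'' version follows as before. Parts (c) and (d) come out by symmetry and contradiction: if $\Lambda(\sigma)=\Lambda(\tau)$ with $\sigma\neq\tau$ then both $\Lambda(\tau)\preceq\Lambda(\sigma)$ and $\Lambda(\sigma)\preceq\Lambda(\tau)$ hold, so (b) gives the contradictory pair $c^T\tau>c^T\sigma$ and $c^T\sigma>c^T\tau$; similarly $\Lambda'(\sigma)d=0$ with $d\neq 0$ gives $\Lambda'(\sigma)(\pm d)\preceq 0$, whence (a) yields $c^Td>0$ and $c^T(-d)>0$. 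One point to watch is that (c) and (d) are asserted on all of $\mathbb{R}^n_+$, not only on $[a,b]^n$; since any two points of $\mathbb{R}^n_+$ lie in some box $[a',b']^n$, I would simply apply the box-dependent constants $c,\lambda$ produced above to that enclosing box.

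The step I expect to require the most care is the extraction of a single, $\sigma$-independent $\lambda>0$: Lemma~\ref{lemma:inverse_monotonicity_Lambda} is pointwise in $\sigma$, so the existential ``there exists $\lambda>0$'' in the corollary must be secured by the compactness of $[a,b]^n$ together with continuity of $\sigma\mapsto\lambda_{\max}(\Lambda'(\sigma)\,\cdot\,)$. The remaining ingredients---the scalar inequality behind (a), the convexity estimate feeding (b), and the symmetry arguments for (c)--(d)---are routine once this uniformity is in place.
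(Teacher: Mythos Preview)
Your proposal is correct and follows essentially the same route as the paper: choose $c_j:=1/\delta_j$ with $D$ from Lemma~\ref{lemma:the_deltas}, use continuity of $\Lambda'$ and compactness of $[a,b]^n$ to upgrade the pointwise constant of Lemma~\ref{lemma:inverse_monotonicity_Lambda} to a uniform $\lambda>0$, derive (b) from (a) via the convexity estimate \eqref{eq:convex_deriv}, and obtain (c)--(d) by the symmetry/enclosing-box argument. The only difference is that you spell out the elementary inequality $c^Td\geq M+(n-1)m>0$ where the paper simply asserts that (a) follows from Lemma~\ref{lemma:inverse_monotonicity_Lambda}.
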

\begin{proof}
Lemma~\ref{lemma:the_deltas} yields a diagonal matrix 
$D\in \mathbb{R}^{n\times n}$ with diagonal elements $\delta_1,\ldots,\delta_n>0$,
so that 
\begin{equation*}
\lambda_{\max}(\Lambda'(\sigma)D((n-1)e_j'-e_j))>0 \quad \text{ for all } \sigma\in [a,b]^n,\ j=1,\ldots,n.
\end{equation*}
Since
\[
\sigma \mapsto \min_{j=1,\ldots,n}\lambda_{\max}(\Lambda'(\sigma)D((n-1)e_j'-e_j))
\]
is a continuous mapping from $[a,b]^n\to \mathbb{R}$, we obtain by compactness that
\[
\exists \lambda>0: \quad \min_{j=1,\ldots,n}\lambda_{\max}(\Lambda'(\sigma)D((n-1)e_j'-e_j))\geq \lambda  \quad \text{ for all } \sigma\in [a,b]^n.
\] 
Setting $c^T:=\begin{pmatrix} \frac{1}{\delta_1} & \dots & \frac{1}{\delta_n}\end{pmatrix}$, (a) follows from Lemma~\ref{lemma:inverse_monotonicity_Lambda}.

(b) follows from (a) as the convexity property \eqref{eq:convex_deriv} yields that
\[
\lambda_{\max}(\Lambda'(\sigma)(\tau-\sigma))\leq \lambda_{\max}(\Lambda(\tau)-\Lambda(\sigma)) \quad \text{ for all } \sigma,\tau\in [a,b]^n.
\]

Clearly, (b) implies injectivity of $\Lambda:\ [a,b]^n\to \LL(\Ld^2(\Sigma))$.
Since this holds for arbitrary large intervals $[a,b]^n$, we obtain injectivity on all of $\mathbb{R}^n_+$, so that (c) is proven. Likewise, (d) follows from (a).
\end{proof}

\paragraph{Proof of Theorem~\ref{thm:main_infinite} and Remark~\ref{rem:Lipschitz_infinite}}

We can now prove Theorem~\ref{thm:main_infinite} with $c\in \mathbb{R}^n_+$, and $\lambda>0$ as given by Corollary~\ref{cor:inf_meas_converse_monotonicity}.
First note
that Corollary~\ref{cor:inf_meas_set_convex} ensures that \eqref{eq:sd_program_nonoise_infinite} and \eqref{eq:sd_program_noise_infinite} are linear optimization problems on convex sets. 
 
Let $\hat\sigma\in [a,b]^n$, and $\hat Y:=\Lambda(\hat \sigma)$. Then $\hat\sigma$ is feasible for the optimization problem \eqref{eq:sd_program_nonoise_infinite}.
For every other feasible $\sigma\in [a,b]^n$, $\sigma\neq \hat\sigma$, the feasibility yields that $\Lambda(\sigma)\preceq \hat Y=\Lambda(\hat\sigma)$, so that $c^T \sigma>c^T\hat \sigma$ by Corollary~\ref{cor:inf_meas_converse_monotonicity}(b). Hence, $\hat\sigma$ is the unique minimizer of 
the convex semi-definite optimization problem \eqref{eq:sd_program_nonoise_infinite}, and thus (a) is proven.

To prove (b), let $\delta>0$, and $Y^\delta\in \LL(L^2_\diamond(\Sigma))$ be self-adjoint with
$\norm{Y^\delta-\hat Y}_{\LL(L^2_\diamond(\Sigma))}\leq \delta$. Since the constraint set of 
\eqref{eq:sd_program_noise_infinite} is non-empty and compact, and the cost function is continuous, there exists a minimizer $\sigma^\delta$. Since also $\hat\sigma$ is feasible for \eqref{eq:sd_program_noise_infinite}, it follows that 
$c^T \sigma^\delta\leq c^T \hat\sigma$. By contraposition of Corollary~\ref{cor:inf_meas_converse_monotonicity}(b) we obtain that
\begin{align*}
\frac{\lambda\norm{\sigma^\delta-\hat\sigma}_{c,\delta}}{n-1}\leq \lambda_{\max}(\Lambda(\sigma^\delta)-\Lambda(\hat \sigma))
\leq \lambda_{\max}(Y^\delta + \delta I - \hat Y)\leq 2\delta.
\end{align*}
Hence, (b) follows, and the same argument also yields the Lipschitz stability result in Remark~\ref{rem:Lipschitz_infinite}.
The injectivity results are proven in Corollary~\ref{cor:inf_meas_converse_monotonicity},(c) and (d).
\hfill $\Box$

\subsection{The case of finitely many measurements
}\label{Subsec:proof_finite_data}

We will now treat the case of finitely many measurements. As introduced in subsection~\ref{subsect:result_finite}, let $g_1,g_2,\ldots\in  L^2_\diamond(\Sigma)$ have dense span in $L^2_\diamond(\Sigma)$, and 
consider the finite-dimensional forward operator
\begin{equation*}
F_m:\ \mathbb{R}^n_+\to \sS_m\subset \mathbb{R}^{m\times m}, \quad F(\sigma):=\left(\int_{\Sigma} g_j \Lambda(\sigma) g_k \dx[s]\right)_{j,k=1,\ldots,m},
\end{equation*}
with $m\in \N$.

\paragraph{Monotonicity, convexity, and localized potentials} 

Again, we start by summarizing the monotonicity, convexity, and localized potentials properties of the forward operator.

\begin{lemma}\label{lemma:properties_Fm}
\begin{enumerate}[(a)]
\item $F_m$ is Fr\'echet differentiable with continuous derivative 
\[
F_m':\ \mathbb{R}^n_+\to \LL(\mathbb{R}^n,\mathbb{R}^{m\times m}).
\]
$F_m'(\sigma)d\in \mathbb{R}^{m\times m}$ is symmetric for all $\sigma\in \mathbb{R}^n_+$, and $d\in \mathbb{R}^n$. Moreover,
\begin{alignat}{2}
\label{eq:monoton_deriv_Fm} F_m'(\sigma)d&\preceq 0 \quad && \text{for all $\sigma\in \mathbb{R}^n_+$, $0\leq d\in \mathbb{R}^n$,}\\
\label{eq:convex_deriv_Fm} F_m(\tau)-F_m(\sigma)&\succeq F_m'(\sigma) (\tau-\sigma) \quad && \text{for all $\sigma,\tau\in \mathbb{R}^n_+$.}
\end{alignat}
\item $F_m$ is monotonically decreasing, i.e.
for all $\sigma,\tau\in \mathbb{R}^n_+$
\begin{align}\label{eq:monotonicity_Fm}
\sigma\geq \tau \quad \text{ implies } \quad F_m(\sigma)\preceq F_m(\tau),
\end{align}
Also, for all $\sigma\in \mathbb{R}^n_+$, and $d,\tilde d\in \mathbb{R}^n$,
\begin{align}\label{eq:monotonicity_deriv_Fm}
d\geq \tilde d \quad \text{ implies } \quad F_m'(\sigma)d\preceq F_m'(\sigma)\tilde d.
\end{align}
\item $F_m$ is convex, i.e.
for all $\sigma,\tau\in \mathbb{R}^n_+$, and $t\in [0,1]$,
\begin{align}
F_m(t \sigma + (1-t) \tau)\preceq t F_m(\sigma) + (1-t) F_m(\tau).
\end{align}
\item For all $C>0$, there exists $M\in \N$, so that
\begin{align}
\label{eq:dir_deriv_Fm}
F_m'(\sigma)(e_j - C e_j^+)\not\succeq 0 \quad \text{ for all } \sigma\in [a,b]^n,\ j\in \{1,\ldots,n\},\ m\geq M.
\end{align}
\end{enumerate}
\end{lemma}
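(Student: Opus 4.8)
The plan is to prove Lemma~\ref{lemma:properties_Fm} by reducing each statement to its infinite-dimensional counterpart in Lemma~\ref{lemma:properties_Lambda}, exploiting the fact that the finite-dimensional operator $F_m(\sigma)$ is precisely the Galerkin projection of $\Lambda(\sigma)$ onto $V_m:=\operatorname{span}\{g_1,\ldots,g_m\}$. Concretely, if $P_m:\ \Ld^2(\Sigma)\to V_m$ denotes the orthogonal projection (equivalently, the coordinate-representation map sending $x\in \mathbb{R}^m$ to $\sum_k x_k g_k$), then $x^T F_m(\sigma) y = \int_\Sigma (\sum_j x_j g_j) \Lambda(\sigma)(\sum_k y_k g_k)\dx[s]$. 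Since $F_m$ depends on $\sigma$ only through $\Lambda(\sigma)$ in a \emph{bounded linear} way, Fr\'echet differentiability in part (a) is inherited directly from that of $\Lambda$, with $x^T (F_m'(\sigma)d) y = \int_\Sigma (\sum_j x_j g_j)(\Lambda'(\sigma)d)(\sum_k y_k g_k)\dx[s]$. Symmetry of $F_m'(\sigma)d$ follows from selfadjointness of $\Lambda'(\sigma)d$. The two inequalities \eqref{eq:monoton_deriv_Fm} and \eqref{eq:convex_deriv_Fm} are immediate: for any $x\in \mathbb{R}^m$, setting $g:=\sum_k x_k g_k\in \Ld^2(\Sigma)$, the quadratic form $x^T(F_m'(\sigma)d)x$ equals $\int_\Sigma g (\Lambda'(\sigma)d) g\dx[s]$, which is $\leq 0$ for $d\geq 0$ by \eqref{eq:monoton_deriv}; and analogously \eqref{eq:convex_deriv} tested against $g$ yields \eqref{eq:convex_deriv_Fm}. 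Parts (b) and (c) then follow by exactly the same reasoning used in Lemma~\ref{lemma:properties_Lambda}, since monotonicity and convexity of the Loewner order are preserved under compression to any subspace: testing $x^T(\cdot)x$ against $g=\sum x_k g_k$ reduces each matrix inequality to its operator counterpart.

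The genuinely new ingredient, and the step I expect to be the main obstacle, is part (d): upgrading the \emph{pointwise} localized-potentials statement \eqref{eq:dir_deriv} to a statement that holds with a \emph{single} finite truncation parameter $M$, \emph{uniformly} over all $\sigma\in [a,b]^n$ and all directions $j$. The difficulty is that the localized potentials $(g_k)_k$ produced in Lemma~\ref{lemma:properties_Lambda}(d) live in the infinite-dimensional $\Ld^2(\Sigma)$ and depend on $\sigma$ and $j$; one must show that a finite-dimensional approximation from $V_m$ still makes $F_m'(\sigma)(e_j-Ce_j^+)\not\succeq 0$, and that a \emph{fixed} $m$ works simultaneously for the compact parameter set $[a,b]^n$ and the finite index set $\{1,\ldots,n\}$.

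My plan for (d) is a density-plus-compactness argument mirroring the structure of Lemma~\ref{lemma:compact_argument}. First, for fixed $C>0$ define, analogously to $\psi$ there, the quantity
\[
\psi(\sigma):=\sup_{g\in \Ld^2(\Sigma),\ \norm{g}=1}\left(-\int_\Sigma g\,\Lambda'(\sigma)(e_j-Ce_j^+)g\dx[s]\right),
\]
which by \eqref{eq:dir_deriv} is strictly positive and, by the lower-semicontinuity/compactness argument already carried out in Lemma~\ref{lemma:compact_argument}, is bounded below by some $\epsilon>0$ uniformly over $\sigma\in [a,b]^n$ and $j\in\{1,\ldots,n\}$ (the index set being finite). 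Next I would use density of $\bigcup_m V_m$ in $\Ld^2(\Sigma)$ together with continuity of the bilinear form to show that the restricted supremum over unit vectors $g\in V_m$ converges up to $\psi(\sigma)$ as $m\to\infty$; define the finite-dimensional analogue
\[
\psi_m(\sigma):=\max_{x\in \mathbb{R}^m,\ \norm{x}=1}\left(-x^T F_m'(\sigma)(e_j-Ce_j^+)x\right),
\]
which coincides with the restricted supremum since $x\mapsto \sum_k x_k g_k$ is an isometry from $\mathbb{R}^m$ onto $V_m$. The key point is that $\psi_m$ is monotonically nondecreasing in $m$ (larger subspaces give larger suprema) and converges pointwise to $\psi$; by Dini-type monotone convergence together with joint continuity in $\sigma$ on the compact set $[a,b]^n$, and finiteness of the $j$-index set, I would extract a uniform threshold $M$ such that $\psi_m(\sigma)\geq \epsilon/2>0$ for all $m\geq M$, all $\sigma\in[a,b]^n$, and all $j$. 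This strict positivity is exactly $F_m'(\sigma)(e_j-Ce_j^+)\not\succeq 0$, proving \eqref{eq:dir_deriv_Fm}. The main subtlety to handle carefully is establishing the \emph{uniform} (in $\sigma$) convergence $\psi_m\to\psi$; I would secure it by combining equicontinuity of $\{\psi_m\}$ (inherited from continuity of $\Lambda'$ on the compact parameter box) with the monotone pointwise convergence via a standard Dini argument.
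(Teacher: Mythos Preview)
Your treatment of (a)--(c) matches the paper's: both observe that $x^T F_m(\sigma) x=\int_\Sigma g\,\Lambda(\sigma)g\dx[s]$ with $g=\sum_k x_k g_k$, so differentiability, symmetry, monotonicity, and convexity are inherited from Lemma~\ref{lemma:properties_Lambda} by testing against finitely-supported $g$.

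For (d) you take a genuinely different route. The paper argues by contradiction: if for every $m$ there existed a bad pair $(\sigma_m,j_m)$ with $F_m'(\sigma_m)(e_{j_m}-Ce_{j_m}^+)\succeq 0$, then by compactness one passes to a subsequence with $\sigma_m\to\sigma$ and $j_m\equiv j$; since $V_k\subset V_m$ for $k\leq m$ one also has $F_k'(\sigma_m)(e_j-Ce_j^+)\succeq 0$, and continuity plus density of $\bigcup_k V_k$ then force $\Lambda'(\sigma)(e_j-Ce_j^+)\succeq 0$, contradicting \eqref{eq:dir_deriv}. Your Dini-type argument is a valid alternative and arguably more transparent, but there is a slip: the paper only assumes $g_1,g_2,\ldots$ have dense span, not that they are orthonormal, so the map $x\mapsto\sum_k x_k g_k$ is \emph{not} an isometry, and your $\psi_m$ (defined with $\|x\|_{\mathbb{R}^m}=1$) need not converge pointwise to $\psi$. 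The fix is easy---define $\psi_m(\sigma):=\sup\{-\int_\Sigma g\,\Lambda'(\sigma)(e_j-Ce_j^+)g\dx[s]:\ g\in V_m,\ \|g\|_{L^2}=1\}$ instead; then monotonicity in $m$ follows from $V_m\subset V_{m+1}$, pointwise convergence from density, continuity of both $\psi_m$ and $\psi$ from the $1$-Lipschitz dependence of $\lambda_{\max}$ on the operator, and Dini applies. The paper's contradiction argument sidesteps this issue entirely (it never compares norms on $V_m$ and $\Ld^2(\Sigma)$) and needs only lower semicontinuity of $\psi$, which is why it is slightly more economical; your route, once repaired, has the mild advantage of yielding a uniform quantitative lower bound $\psi_m\geq\epsilon/2$ rather than a bare existence of $M$.
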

\begin{proof}
For all $\sigma\in [a,b]^n$, $m\in \N$, and $v\in \mathbb{R}^m$, we have that
\[
v^T F_m(\sigma) v=\int_{\Sigma} g \Lambda(\sigma) g\dx[s],
\quad \text{ with } g=\sum_{j=1}^m v_j g_j.
\]
Hence, the monotonicity and convexity properties (a), (b), and (c), immediately 
carry over from that of the infinite-dimensional forward operator $\Lambda$
in Lemma~\ref{lemma:properties_Lambda}.

To prove (d), it clearly suffices to show that, for all $C>0$,
\begin{align*}
\exists m\in \N:\quad F_m'(\sigma)(e_j - C e_j^+)\not\succeq 0\quad \text{ for all } \sigma\in [a,b]^n,\ j\in \{1,\ldots,n\}.
\end{align*}
We argue by contradiction, and assume that there exists $C>0$, so that 
\begin{align}\label{eq:loc_pot_finite_aux}
\forall m\in \N:\ F_m'(\sigma_m)(e_{j_m} - C e_{j_m}^+)\succeq 0
\ \text{ for some } \sigma_m\in [a,b]^n,\ j_m\in \{1,\ldots,n\}.
\end{align}
By compactness, after passing to a subsequence, we can assume that $\sigma_m\to \sigma$, and $j_m=j$ for some $\sigma\in [a,b]^n$, and $j\in \{1,\ldots,n\}$. 
Since \eqref{eq:loc_pot_finite_aux} also implies that
\[
F_k'(\sigma_m)(e_{j_m} - C e_{j_m}^+)\succeq 0
\quad \text{ for all } k\in \N,\ m\geq k,
\]
it follows by continuity that
\[
F_k'(\sigma)(e_{j} - C e_{j}^+)\succeq 0
\quad \text{ for all } k\in \N.
\]
However, since $g_1,g_2,\ldots\in  L^2_\diamond(\Sigma)$ have dense span in $L^2_\diamond(\Sigma)$, this would imply that
\[
\Lambda'(\sigma)(e_{j} - C e_{j}^+)\succeq 0,
\]
and thus contradict Lemma~\ref{lemma:properties_Lambda}.
\end{proof}

As in the infinite-dimensional case, the continuity and convexity properties of the forward mapping yield that the admissible set of the optimization problems \eqref{eq:sd_program_nonoise} and \eqref{eq:sd_program_noise} is closed and convex.

\begin{corollary}\label{cor:fin_meas_set_convex} For all $m\in \N$, and every symmetric matrix $Y\in \mathbb{R}^{m\times m}$, the set 
\[
\mathcal C:=\{ \sigma\in [a,b]^n:\ F_m(\sigma)\preceq Y\}
\]
is a closed convex set in $\mathbb{R}^n$.
\end{corollary}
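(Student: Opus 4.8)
The plan is to mirror the argument used for the infinite-dimensional analogue in Corollary~\ref{cor:inf_meas_set_convex}, replacing the properties of $\Lambda$ by the corresponding properties of $F_m$ collected in Lemma~\ref{lemma:properties_Fm}. Concretely, I would establish convexity and closedness of $\mathcal C$ separately, since the two follow from distinct parts of that lemma, and then combine them.

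First I would prove convexity. Let $\sigma,\tau\in\mathcal C$ and $t\in[0,1]$. Then $t\sigma+(1-t)\tau\in[a,b]^n$ because $[a,b]^n$ is itself convex. Invoking the convexity of $F_m$ from Lemma~\ref{lemma:properties_Fm}(c) together with the feasibility assumptions $F_m(\sigma)\preceq Y$ and $F_m(\tau)\preceq Y$, I would obtain
\[
F_m(t\sigma+(1-t)\tau)\preceq tF_m(\sigma)+(1-t)F_m(\tau)\preceq tY+(1-t)Y=Y,
\]
where the final inequality uses that the Loewner order is preserved under multiplication by the nonnegative scalars $t$ and $1-t$ and under addition. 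Hence $t\sigma+(1-t)\tau\in\mathcal C$, which gives convexity.

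Next I would prove closedness. The forward operator $F_m$ is continuous, as it is Fr\'echet differentiable by Lemma~\ref{lemma:properties_Fm}(a), and $[a,b]^n$ is closed. The constraint $F_m(\sigma)\preceq Y$ is equivalent to $Y-F_m(\sigma)$ lying in the cone of symmetric positive semidefinite matrices in $\sS_m$, which is a closed subset of $\sS_m$ (equivalently, the smallest eigenvalue $A\mapsto\lambda_{\min}(A)$ is continuous and the constraint reads $\lambda_{\min}(Y-F_m(\sigma))\geq 0$). Therefore $\mathcal C$ is the intersection of $[a,b]^n$ with the preimage, under the continuous map $\sigma\mapsto Y-F_m(\sigma)$, of this closed cone, and is thus closed.

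I do not expect any genuine obstacle: the statement is a direct finite-dimensional transcription of Corollary~\ref{cor:inf_meas_set_convex}, and the only ingredients needed, namely continuity and Loewner-convexity of $F_m$, are exactly what Lemma~\ref{lemma:properties_Fm}(a) and (c) supply. The sole point requiring minor care is the appeal to closedness of the positive semidefinite cone, but this is standard and requires no further work.
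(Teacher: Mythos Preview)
Your proposal is correct and follows precisely the paper's own approach: the paper's proof simply reads ``This follows from Lemma~\ref{lemma:properties_Fm}'', invoking continuity and Loewner-convexity of $F_m$ exactly as you do, just without spelling out the details. Your expanded argument is a faithful elaboration of that one-line proof.
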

\begin{proof}
This follows from Lemma~\ref{lemma:properties_Fm}.
\end{proof}

\paragraph{A converse monotonicity result}
We will now show that the converse monotonicity 
result in Corollary~\ref{cor:inf_meas_converse_monotonicity} still holds for the case of finitely many (but sufficiently many) measurements.

\begin{lemma}\label{lemma:fin_meas_converse_monotonicity}
There exist $c\in\mathbb{R}^n_+$, and $\lambda>0$, so that for sufficiently large $m\in \N$, the following holds.
\begin{enumerate}[(a)]
\item For all $\sigma\in [a,b]^n$, and $d\in \mathbb{R}^n$, 
\[
\lambda_{\max}(F_m'(\sigma)d)< \frac{\lambda \norm{d}_{c,\infty}}{n-1} \quad  \text{ implies } \quad c^T d> 0,
\]
and thus, a fortiori, for $d\neq 0$,
\[
F_m'(\sigma)d\preceq 0 \quad  \text{ implies } \quad c^T d> 0.
\]
\item For all $\sigma,\tau \in [a,b]^n$,
\[
\lambda_{\max}(F_m(\tau)-F_m(\sigma))<  \frac{\lambda \norm{\tau-\sigma}_{c,\infty}}{n-1}   \quad \text{ implies } \quad c^T \tau> c^T\sigma,
\]
and thus, a fortiori, for $\sigma\neq \tau$,
\[
F_m(\tau)\preceq F_m(\sigma) \quad \text{ implies } \quad c^T \tau > c^T \sigma.
\]
\item The non-linear forward mapping $F_m:\ \mathbb{R}^n_+\to \LL(\Ld^2(\Sigma))$ is injective.
\item For all $\sigma\in \mathbb{R}^n_+$, the linearized forward mapping $F_m'(\sigma)\in \LL(\mathbb{R}^n,\LL(\Ld^2(\Sigma)))$ is injective.
\end{enumerate}
\end{lemma}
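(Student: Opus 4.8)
The plan is to replay the four-step chain that established the infinite-dimensional converse monotonicity (Lemmas~\ref{lemma:compact_argument}, \ref{lemma:the_deltas}, \ref{lemma:inverse_monotonicity_Lambda}, and Corollary~\ref{cor:inf_meas_converse_monotonicity}) with $\Lambda'$ replaced by $F_m'$, the only genuinely new issue being that the constants $c\in\mathbb{R}^n_+$ and $\lambda>0$ must be chosen uniformly in $m$ for all sufficiently large $m$. The device that makes this uniformity possible is the block structure of the Galerkin projection: since $F_m'(\sigma)d$ is the upper-left $m\times m$ principal block of $F_{m'}'(\sigma)d$ for every $m'\geq m$ and every fixed direction $d$, the variational characterization of the largest eigenvalue gives $\lambda_{\max}(F_m'(\sigma)d)\leq\lambda_{\max}(F_{m'}'(\sigma)d)$; that is, all the relevant maximal eigenvalues and Rayleigh-quotient suprema are \emph{nondecreasing in $m$}.

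First I would prove the finite analog of Lemma~\ref{lemma:compact_argument}: for every $C>0$ and every $j$ there are $\delta>0$ and $M\in\N$ with $-F_m'(\sigma)(e_j-\delta e_j^- - C e_j^+)\not\preceq 0$ for all $\sigma\in[a,b]^n$ and all $m\geq M$. Starting from the uniform localized-potentials property \eqref{eq:dir_deriv_Fm}, define $\psi_m(\sigma):=\sup_{\norm{v}=1}\bigl(-v^T F_m'(\sigma)(e_j-C e_j^+)v\bigr)$. This is positive on the compact set $[a,b]^n$ for $m\geq M$, lower semicontinuous, and by the block-monotonicity above nondecreasing in $m$; hence $\epsilon:=\min_{[a,b]^n}\psi_M>0$ is a lower bound valid for all $m\geq M$. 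Together with the uniform bound $S:=\sup_m\sup_\sigma\Norm{F_m'(\sigma)e_j^-}_2<\infty$, which follows from $v^T F_m'(\sigma)e_j^- v=\int_\Sigma g\,\Lambda'(\sigma)e_j^- g\dx[s]$ (with $g=\sum_k v_k g_k$) and boundedness of $\Lambda'(\sigma)e_j^-$, the same scaling $\delta:=\epsilon/(2S)$ as in Lemma~\ref{lemma:compact_argument} produces the claim.

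Next I would run the backward induction of Lemma~\ref{lemma:the_deltas} verbatim, using the monotonicity \eqref{eq:monotonicity_deriv_Fm} in place of \eqref{eq:monotonicity_deriv}; taking $M_0$ to be the maximum of the finitely many thresholds produced along the way yields fixed numbers $0<\delta_1\leq\dots\leq\delta_n=1$ and a diagonal matrix $D$ with $-F_m'(\sigma)D(e_j-(n-1)e_j')\not\preceq 0$ for all $\sigma\in[a,b]^n$, all $j$, and all $m\geq M_0$, the finite version of \eqref{eq:the_deltas_final}. Since Lemma~\ref{lemma:inverse_monotonicity_Lambda} is purely algebraic and uses only the monotonicity of the linearized operator, it applies verbatim to $F_m'$. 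Setting $c^T:=(1/\delta_1,\dots,1/\delta_n)$ and $\lambda:=\lambda_{M_0}$, where $\lambda_m:=\min_{\sigma\in[a,b]^n}\min_j\lambda_{\max}(F_m'(\sigma)D((n-1)e_j'-e_j))$ (attained and continuous by continuity of $F_m'$), the block-monotonicity gives $\lambda_m\geq\lambda_{M_0}=\lambda>0$ for all $m\geq M_0$, so the implication \eqref{eq:conv_mon_sum_aux} for $F_m'$ yields part (a); a smaller $\lambda$ only strengthens the hypothesis, so the implication persists. Part (b) then follows from (a) through the convexity estimate $\lambda_{\max}(F_m'(\sigma)(\tau-\sigma))\leq\lambda_{\max}(F_m(\tau)-F_m(\sigma))$ coming from \eqref{eq:convex_deriv_Fm}, exactly as in Corollary~\ref{cor:inf_meas_converse_monotonicity}(b), and parts (c),(d) follow from (b) and (a) along the lines of Corollary~\ref{cor:inf_meas_converse_monotonicity}(c),(d), where one uses that (a),(b) may be established on any box (at the cost of enlarging $m$).

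I expect the main obstacle to be precisely the uniform-in-$m$ control just described: without it one would obtain a threshold $M$, a weight vector $c$, and a modulus $\lambda$ that a priori drift with $m$, which would be useless for a statement asserting a single pair $(c,\lambda)$ valid for all large $m$. The resolution hinges on recognizing the monotonicity of eigenvalues and suprema under enlarging the measurement span, which both upgrades the per-$m$ positivity statements to uniform lower bounds and lets the scaling constants $\delta$ and $\delta_j$ be frozen once and for all. A secondary point to verify is the uniform boundedness of $\Norm{F_m'(\sigma)e_j^-}_2$ entering the scaling step, which is where one uses that the quadratic forms of $F_m'$ are restrictions of the bounded quadratic form of $\Lambda'$.
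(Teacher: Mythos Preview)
Your approach is correct in spirit but takes a detour the paper avoids, and it contains one small gap.

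\textbf{Comparison with the paper.} The paper does not replay Lemmas~\ref{lemma:compact_argument} and~\ref{lemma:the_deltas} for $F_m'$. Instead it simply reuses the diagonal matrix $D$ already produced by Lemma~\ref{lemma:the_deltas} for $\Lambda'$, and transfers the single conclusion $-\Lambda'(\sigma)D(e_j-(n-1)e_j')\not\preceq 0$ to $-F_m'(\sigma)D(e_j-(n-1)e_j')\not\preceq 0$ for all $m\geq M$ by the same density/compactness contradiction used in Lemma~\ref{lemma:properties_Fm}(d). Then $\lambda$ is defined at level $M$ and pushed to $m\geq M$ via the block-monotonicity of $\lambda_{\max}$ that you also use. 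Your route rebuilds the $\delta_j$'s from scratch for $F_m'$; this works but is unnecessary, and it forces you to control the auxiliary quantity $S$ uniformly in $m$, which the paper's argument never needs.

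\textbf{The gap.} Your bound $S=\sup_m\sup_\sigma\Norm{F_m'(\sigma)e_j^-}_2<\infty$ is not justified as stated. From $v^T F_m'(\sigma)e_j^-\,v=\int_\Sigma g\,\Lambda'(\sigma)e_j^-\,g\dx[s]$ with $g=\sum_k v_k g_k$ you only get $|v^T F_m'(\sigma)e_j^-\,v|\leq \Norm{\Lambda'(\sigma)e_j^-}\,\norm{g}_{L^2}^2$, and $\norm{g}_{L^2}$ is not controlled by $\norm{v}_2$ uniformly in $m$ unless the $g_k$ are orthonormal (the paper only assumes dense span). The fix is straightforward within your framework: bound $S$ only at the threshold $m=M$, obtain the definiteness $-F_M'(\sigma)(e_j-\delta e_j^- - Ce_j^+)\not\preceq 0$ there, and then invoke the block-monotonicity of $\lambda_{\max}$ you already exploit to propagate it to all $m\geq M$. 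Alternatively, and more simply, adopt the paper's shortcut and take $D$ directly from Lemma~\ref{lemma:the_deltas}, which eliminates the $S$-step altogether.
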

\begin{proof}
Using Lemma~\ref{lemma:the_deltas}, we obtain
a diagonal matrix $D\in \mathbb{R}^{n\times n}$ with diagonal elements 
 $\delta_1,\ldots,\delta_n>0$, so that 
for all $\sigma\in [a,b]^n$
\begin{equation*}
-\Lambda'(\sigma)D(e_j-(n-1)e_j') \not\preceq 0.
\end{equation*}
By the same compactness argument as in the proof of Lemma~\ref{lemma:properties_Fm}(d), it follows that there exists $M\in \N$ with
\begin{equation*}
-F_m'(\sigma)D(e_j-(n-1)e_j') \not\preceq 0 \quad \text{ for all } m\geq M.
\end{equation*}
Using compactness again we get that
\[
\lambda:=\min_{\sigma\in [a,b]^n} \lambda_{\max}\left( -F_M'(\sigma)D(e_j-(n-1)e_j')\right)>0,
\]
and thus 
\[
\lambda_{\max} \left( -F_m'(\sigma)D(e_j-(n-1)e_j')\right)\geq \lambda \quad \text{ for all } \sigma\in [a,b]^n,\ m\geq M.
\]
Setting $c^T:=\begin{pmatrix} \frac{1}{\delta_1} & \cdots & \frac{1}{\delta_n}\end{pmatrix}$, and applying Lemma~\ref{lemma:inverse_monotonicity_Lambda}
with $F_m$, $m\geq M$, in place of $\Lambda$, assertion (a) follows. Assertion (b)--(d) 
follow as in Corollary~\ref{cor:inf_meas_converse_monotonicity}.
\end{proof}

\paragraph{Proof of Theorem~\ref{thm:main_finite}}
Theorem~\ref{thm:main_finite} and the Lipschitz stability result \eqref{eq:Lipschitz_finite} now follow from Lemma~\ref{lemma:fin_meas_converse_monotonicity} exactly as in the infinite-dimensional case. \hfill $\Box$

\section{Conclusions and Outlook}

We conclude this work with some remarks on the applicability of our results and possible extensions. The Calder\'on problem is infamous for its high degree of non-linearity and ill-posedness. The central point of this work is to show that the high non-linearity of the problem does not inevitably lead to the problem of local convergence (resp., local minima) demonstrated in section~\ref{section:Motivation}, but that convex reformulations are possible. In that sense, our result proves that it is principally possible to overcome the problem of non-linearity in the Calder\'on problem with finitely many unknowns.

Let us stress that this is purely a theoretical existence result and that our proofs are non-constructive in three important aspects: We show that there exists a number of measurements that uniquely determine the unknown conductivity values and allow for the convex reformulation, but we do not have a constructive method for calculating the required number of measurements. We show that the problem is equivalent to minimizing a linear functional under a convex constraint, but we do not have a constructive method for calculating the vector $c\in \R^n$ defining the linear functional. Also, we derive an error bound for the case of noisy measurements, but we do not have a constructive method for determining the error bound constant $\lambda>0$.

For the simpler, but closely related, non-linear inverse problem of identifying a Robin coefficient \cite{harrach2019global,harrach2021uniqueness,harrach2021solving}, constructive answers to these three issues are known. \cite{harrach2021solving} gives an explicit (and easy-to-check) criterium to identify whether the number of measurements is sufficiently high for uniqueness and convex reformulation, and to explicitly calculate the stability constant $\lambda$ in the error bound. The criterion is based on 
checking a definiteness property of certain directional derivatives in only finitely many evaluations points.
A similar approach might be possible for the Calder\'on problem considered in this work, but the directional derivative arguments are technically much more involved and the extension of the arguments in \cite{harrach2021solving} is far from trivial.
Moreover, for the Robin problem one can simply choose $c:=\1$, but this is based on stronger localized potentials results that do not hold for the Calder\'on problem. Finding methods to constructively characterize $c$ for the Calder\'on problem will be an important topic for further research. At that point, let us however stress again that our result shows that $c$ only depends on the given setting (i.e., the domain and pixel partition and the upper and lower conductivity bounds), but not on the unknown solution. Hence, for a fixed given setting, one could try to determine $c$ in an offline phase, e.g., by calculating $F_m(\sigma)$ for several samples of $\sigma$, and adapting $c$ to these samples.

\bibliography{literaturliste}
\bibliographystyle{abbrv}

\end{document}